\newtheorem{theorem}{Theorem}[section]
\newtheorem{lemma}[theorem]{Lemma}
\newtheorem{corollary}[theorem]{Corollary}
\newtheorem{example}[theorem]{Example}
\newtheorem{proposition}[theorem]{Proposition}
\newtheorem{remark}[theorem]{Remark}
\newtheorem{question}[theorem]{Question}
\def\NN{\mathbb{N}}
\def\CC{\mathbb{C}}
\def\DD{\mathbb{D}}
\numberwithin{equation}{section}
\def\int{{\rm int}}
\def\asc{{\rm asc}}
\def\dsc{{\rm dsc}}
\def\acc{{\rm acc}}
\def\iso{{\rm iso}}
\def\dim{{\rm dim}}
\def\ind{{\rm ind}}
\def\snoi{\smallskip\noindent}
\def\i{{\rm ind}}
\def\hd{\hat{\delta}}
\def\codim{{\rm codim}\, }
\def\NN{{\mathbb N}}
\def\CC{{\mathbb C}}
\def\DD{\mathbb{D}}
\def\W{\mathcal{W}}
\def\B{\mathcal{B}}
\def\M{\mathcal{M}}
\def\Q{\mathcal{Q}}
\title{Generalized Kato decomposition and essential spectra}
\author{Milo\v s D. Cvetkovi\'c, Sne\v zana \v C. \v Zivkovi\'c-Zlatanovi\'c\footnote{The authors are
supported by the Ministry of Education, Science and Technological
Development, Republic of Serbia, grant no. 174007.}}
\date{}
\begin{document}
\maketitle

\setcounter{page}{1}

\begin{abstract}
\noindent Let ${\bf R}$ denote any of the following classes: upper
(lower) semi-Fredholm operators, Fredholm operators, upper (lower)
semi-Weyl operators, Weyl operators, upper (lower) semi-Browder
operators, Browder operators. For a bounded linear operator $T$ on a
Banach space $X$ we prove that $T=T_M\oplus T_N$ with $T_M \in {\bf
R}$ and $T_N$ quasinilpotent (nilpotent) if and only if $T$ admits a
generalized Kato decomposition ($T$ is of Kato type) and $0$ is not
an interior point of the corresponding spectrum $\sigma_{\bf
R}(T)=\{\lambda \in \CC: T-\lambda \notin {\bf R}\}$. In addition,
we show that every non-isolated boundary point of the spectrum
$\sigma_{\bf R}(T)$ belongs to the generalized Kato spectrum of $T$.
\end{abstract}

2010 {\it Mathematics subject classification\/}:  47A53, 47A10.

{\it Key words and phrases\/}: generalized Kato decomposition,
generalized Kato spectrum, upper and lower semi-Fredholm, semi-Weyl
and semi-Browder operators.

\section{Introduction}

Given a Banach space $X$ and an operator $T \in L(X)$, $T$ is Drazin
(generalized Drazin) invertible if and only if $T$ is the direct sum
of an invertible and a nilpotent (quasinilpotent) operator \cite{K,
lay, koliha}. M. Berkani proved that $T$ is B-Fredholm if and only
if $T=T_1 \oplus T_2$ with $T_1$ Fredholm and $T_2$ nilpotent
\cite[Theorem 2.7]{Ber}. Very recently, pseudo B-Fredholm and pseudo
B-Weyl operators were defined in a sense that $T$ is pseudo
B-Fredholm (pseudo B-Weyl) if $T=T_1 \oplus T_2$, where $T_1$ is
Fredholm (Weyl) and $T_2$ is quasinilpotent \cite{Boasso1,
pseudoBW}. In accordance with these observations it is natural to
study various types of the direct sums. Namely, let ${\bf R}$ denote
any of the following classes: upper (lower) semi-Fredholm operators,
Fredholm operators, upper (lower) semi-Weyl operators, Weyl
operators, upper (lower) semi-Browder operators, Browder operators,
bounded below operators, surjective operators, invertible operators.
The main objective of this article is to provide necessary and
sufficient conditions for an operator $T \in L(X)$ to be the direct
sum of an operator $T_1 \in {\bf R}$ and a quasinilpotent
(nilpotent) operator $T_2$. We approach the problem by using mainly
the gap theory and the advantage of our method lies in the fact that
it enables us to investigate all cases (for all ${\bf R}$ mentioned
above) at the same time.

We are also able to derive some recent results in a different way.
Namely, P. Aiena and E. Rosas prove in \cite{aienarosas} that if $T
\in L(X)$ is of Kato type, then $T$ ($T^{\prime}$) has the SVEP at
$0$ if and only if the approximate point (surjective) spectrum of
$T$ does not cluster at $0$. Q. Jiang and H. Zhong extend this
result to the operators that admit a generalized Kato decomposition
\cite{kinezi}. They show that if $T$ admits a GKD, $T$
($T^{\prime}$) has the SVEP at $0$ if and only if $0$ is not an
accumulation point of its approximate point (surjective) spectrum,
and it is exactly when $0$ is not an interior point of the
approximate point (surjective) spectrum of $T$. We extend these
results to the cases of the essential spectra. Namely, we show for
every class ${\bf R}$ mentioned earlier that if $T$ admits a GKD
($T$ is of Kato type), $0$ is not an accumulation point of the
spectrum $\sigma_{\bf R}(T)=\{\lambda \in \CC: T-\lambda \notin {\bf
R}\}$ if and only if $0$ is not an interior point of $\sigma_{\bf
R}(T)$. In particular, if ${\bf R} \in \{\text{bounded below
operators, surjective operators}\}$ we obtain the result of Q. Jiang
and H. Zhong. What is more, for $T \in L(X)$ it is proved that if
$0$ is a boundary point of $\sigma_{{\bf R}}(T)$, $T$ admits a GKD
($T$ is of Kato type) if and only if $T$ is the direct sum of an
operator that belongs to the class ${\bf R}$ and a quasinilpotent
(nilpotent) operator. In the special case of invertible operators
from this we derive the result of P. Aiena and E. Rosas (the result
of Q. Jiang and H. Zhong) which says that if $0$ is a boundary point
of the spectrum of $T \in L(X)$, then $T$ is of Kato type ($T$
admits a GKD) if and only if $T$ is Drazin (generalized Drazin)
invertible; see \cite[Theorem 2.9]{aienarosas} and \cite[Theorem
3.8]{kinezi}. It is worth mentioning that this result of P. Aiena
and E. Rosas is an improvement of an earlier result of M. Berkani
which shows that if $0$ is isolated in the spectrum of $T$, then $T$
is a B-Weyl operator if and only if it is Drazin invertible
\cite[Theorem 4.2]{Ber2}.

In Section 2 we set up terminology and recall necessary facts. Our
main results are established in Section 3. Given an operator $T \in
L(X)$, $X$ is a Banach space, we prove that $T=T_M\oplus T_N$ with
$T_M \in {\bf R}$ and $T_N$ quasinilpotent if and only if $T$ admits
a generalized Kato decomposition and $0$ is not an interior point of
$\sigma_{\bf R}(T)$. What is more, the condition that $T$ admits a
generalized Kato decomposition cannot be omitted and it will be
demonstrated by an example. We also show that necessary and
sufficient for $T \in L(X)$ to be the direct sum $T=T_M \oplus T_N$
with $T_M \in {\bf R}$ and $T_N$ quasinilpotent is that $T$ admits a
generalized Kato decomposition and that $0$ is not an accumulation
point of the spectrum $\sigma_{\bf R}(T)$. If ${\bf R}$ is the class
of invertible operators the condition that $T$ admits a GKD is
unnecessary \cite[Theorem 7.1]{koliha}. If $\sigma_{\bf R}(T)$ is
any of the essential spectra we demonstrate by an example that this
condition can not be excluded. If ${\bf R}$ represents the class of
bounded below or surjective operators, we are not sure whether this
condition can be omitted and it is an open question. Also, it is
possible to obtain some results related to the B-Fredholm and B-Weyl
operators. In particular, an operator $T$ is B-Fredholm (B-Weyl) if
and only if it is of Kato type and if $0$ is not an interior point
of its Fredholm (Weyl spectrum).

In the fourth section some applications are presented. We prove that
every boundary point of $\sigma_{\bf R}(T)$, where ${\bf R}$ is any
of the classes mentioned above, which is also an accumulation point
of $\sigma_{\bf R}(T)$ belongs to the generalized Kato spectrum. In
particular, if ${\bf R}$ is the class of invertible operators we
obtain \cite[Corollary 3.6]{kinezi}. If $T \in L(X)$, let
$\sigma_{{\bf gDR}}(T)$ be the set of all $\lambda \in \CC$ such
that $T-\lambda$ can not be represented as the direct sum of an
operator from the class ${\bf R}$ and a quasinilpotent operator. We
show that the connected hull of the spectrum $\sigma_{{\bf gDR}}(T)$
coincide with the connected hull of the generalized Kato spectrum
for every class ${\bf R}$. In particular, the connected hulls of the
generalized Drazin spectrum and  the generalized Kato spectrum are
equal. Moreover, the connected hulls of the B-Fredholm, B-Weyl,
Drazin and of the Kato type spectrum are equal.

\section{Preliminaries}
Let $\mathbb{N} \, (\mathbb{N}_0)$ denote the set of all positive
(non-negative) integers, and let $\mathbb{C}$ denote the set of all
complex numbers. Let $X$ be an infinite dimensional Banach space and
let $L(X)$ be the Banach algebra of all bounded linear operators
acting on $X$. Given $T \in L(X)$, we denote by $N(T), \, R(T)$ and
$\sigma (T)$, the {\em kernel}, the {\em range} and the {\em
spectrum} of $T$, respectively. In addition, $\alpha(T)$ and
$\beta(T)$ will stand for {\em nullity} and {\em defect} of $T$. The
space of bounded linear functionals on $X$ is denoted by
$X^{\prime}$. If $K \subset \mathbb{C}$, then $\partial K$ is the
boundary of $K$, $\acc \, K$ is the set of accumulation points of
$K$, $\iso \, K=K \setminus \acc \, K$ and $\int\, K$ is the set of
interior points of $K$. The group of all invertible operators is
denoted by $L(X)^{-1}$.

Recall that $T$ is said to be {\em nilpotent} when $T^n=0$ for some
$n \in \mathbb{N}$, while $T$ is {\em quasinilpotent} if
$\|T^n\|^{1/n}\to 0$, that is $T-\lambda \in L(X)^{-1}$  for all
complex $\lambda\ne 0$. An operator $T \in L(X)$ is {\em bounded
below} if there exists some $c>0$ such that $c\|x\|\leq \|Tx\| \; \;
\text{for every} \; \; x \in X$. Let $\M(X)$ denote the set of all
bounded below operators, and let $\Q(X)$ denote the set of all
surjective operators. The approximate point spectrum of $T\in L(X)$
is defined by
$$
\sigma_{ap}(T)=\{\lambda\in\CC: T-\lambda\ {\rm is\ not\ bounded\
below}\}
$$
and the surjective spectrum is defined by
$$
\sigma_{su}(T)=\{\lambda\in\CC: T-\lambda\ {\rm is\ not\
surjective}\}.
$$
 An operator $T \in
L(X)$ is {\em Kato} if $R(T)$ is closed and $N(T) \subset R(T^n), \,
n \in \mathbb{N}_0$. If $R(T)$ is closed and $\alpha(T)<\infty$,
then $T \in L(X)$ is said to be {\em upper semi-Fredholm}. An
operator $T \in L(X)$ is {\em lower semi-Fredholm} if
$\beta(T)<\infty$. The set of upper semi-Fredholm operators (lower
semi-Fredholm operators) is denoted by $\Phi_+(X)$ ($\Phi_-(X))$. If
$T$ is upper or lower semi-Fredholm operator then the index of $T$
is defined as $\ind(T)=\alpha(T)-\beta(T)$. An operator $T$ is {\em
Fredholm} if both $\alpha(T)$ and $\beta(T)$ are finite. We will
denote by $\Phi(X)$ the set of Fredholm operators. The sets of {\em
upper semi-Weyl}, {\em lower semi Weyl} and {\em Weyl} operators are
defined by $\W_+(X)=\{T \in \Phi_+(X):\ind(T)\leq0\}$, $\W_-(X)=\{T
\in \Phi_-(X):\ind(T)\geq0\}$ and $\W(X)=\{T \in
\Phi(X):\ind(T)=0\}$, respectively. B-Fredholm and B-Weyl operators
were introduced and studied by M. Berkani \cite{Ber, Ber0, Ber2}. An
operator $T \in L(X)$ is said to be B-Fredholm (B-Weyl) if there is
$n \in \mathbb{N}$ such that $R(T^n)$ is closed and the restriction
$T_{n} \in L(R(T^n))$ of $T$ to $R(T^n)$ is Fredholm (Weyl). The
B-Fredholm and the B-Weyl spectrum of $T$ are defined by
\begin{align*}
\sigma_{B\Phi}(T) &=\{\lambda \in \CC: T-\lambda \; \text{is not
B-Fredholm}\}, \\
\sigma_{B\W}(T) &=\{\lambda \in \CC: T-\lambda \; \text{is not
B-Weyl}\}, \; \text{respectively}. \end{align*} \noindent Recall
that $T \in L(X)$ is said to be {\em Riesz operator}, if $T-\lambda
\in \Phi(X)$ for every non-zero $\lambda \in \mathbb{C}$.

The {\em ascent} of $T$ is defined as $\asc(T)=\inf\{n \in
\mathbb{N}_0:N(T^n)=N(T^{n+1})\}$, and {\em descent} of $T$ is
defined as $\dsc(T)=\inf\{n \in \mathbb{N}_0:R(T^n)=R(T^{n+1})\}$,
where the infimum over the empty set is taken to be infinity. An
operator $T \in L(X)$ is {\em upper semi-Browder} if $T$ is upper
semi-Fredholm and $\asc(T) < \infty$. If $T \in L(X)$ is lower
semi-Fredholm and $\dsc(T)< \infty$, then $T$ is {\em lower
semi-Browder}. Let $\mathcal{B}_+(X)$ ($\mathcal{B}_-(X)$) denote
the set of all upper (lower) semi-Browder operators. The set of
Browder operators is defined by $\mathcal{B}(X)=\mathcal{B}_+(X)
\cap \mathcal{B}_-(X)$.

If $M$ is a subspace of $X$ such that $T(M) \subset M$, $T \in
L(X)$, it is said that $M$ is {\em $T$-invariant}. We define $T_M:M
\to M$ as $T_Mx=Tx, \, x \in M$.  If $M$ and $N$ are two closed
$T$-invariant subspaces of $X$ such that $X=M \oplus N$, we say that
$T$ is {\em completely reduced} by the pair $(M,N)$ and it is
denoted by $(M,N) \in Red(T)$. In this case we write $T=T_M \oplus
T_N$ and say that $T$ is the {\em direct sum} of $T_M$ and $T_N$.

An operator $T \in L(X)$ is said to admit a {\em generalized Kato
decomposition}, abbreviated as GKD, if there exists a pair $(M,N)
\in Red(T)$ such that $T_M$ is Kato and $T_N$ is quasinilpotent. A
relevant case is obtained if we assume that $T_N$ is nilpotent. In
this case $T$ is said to be of {\em Kato type}. An operator is said
to be {\em essentially Kato} if it admits a GKD $(M,N)$ such that
$N$ is finite-dimensional. If $T$ is essentially Kato then $T_N$ is
nilpotent, since every quasinilpotent operator on a finite
dimensional space is nilpotent. The classes $\Phi_+(X), \,
\Phi_-(X), \, \Phi(X), \, \mathcal{B}_+(X), \, \mathcal{B}_-(X)$,
$\mathcal{B}(X)$, $\mathcal{W}_+(X)$, $\mathcal{W}_-(X)$ and
$\mathcal{W}(X)$ belong to the class of essentially Kato operators
\cite[Theorem 16.21]{Mu}. For $T\in L(X)$, {\em the Kato spectrum},
{\em the  essentially Kato spectrum}, { \em the Kato type spectrum}
and {\it the generalized Kato spectrum} are defined by

\begin{align*} \sigma_K(T) &=\{\lambda \in \CC: T-\lambda \ \text{ is\ not\
Kato}\}, \\ \sigma_{eK}(T) &=\{\lambda \in \CC: T-\lambda \ \text{
is\ not\ essentially\ Kato}\}, \\ \sigma_{Kt}(T) &=\{\lambda \in
\CC: T-\lambda\ \text{ is\ not\ of\ Kato\ type}\}, \\ \sigma_{gK}(T)
&= \{\lambda \in \CC: T-\lambda\ \text{ does\ not\ admit\ a\ GKD}\},
\end{align*} respectively. Clearly,
\begin{equation}\label{poc.inkluzija}
\sigma_{gK}(T)\subset \sigma_{Kt}(T)\subset
\sigma_{eK}(T)\subset\sigma_{K}(T)\subset \sigma_{ap}(T)\cap
\sigma_{su}(T).
\end{equation}

The {\em quasinilpotent part} $H_0(T)$ of an operator $T \in L(X)$
is defined by
\[H_0(T)=\{x \in X: \lim_{n \to +\infty} \|T^nx\|^{1/n}=0\}.\]
It is easy to verify that $H_0(T)=\{0\}$ if $T$ is bounded below. An
operator $T \in L(X)$ is quasinilpotent if and only if $H_0(T)=X$
\cite[Theorem 1.68]{aiena2}.

The {\em analytical core} of $T$, denoted by $K(T)$, is the set of
all $x \in X$ for which there exist $c>0$ and a sequence $(x_n)_n$
in $X$ satisfying
\[Tx_1=x, \; \; Tx_{n+1}=x_n \; \; \text{for all} \; \; n \in \NN,
\; \; \|x_n\|\leq c^n\|x\| \; \; \text{for all} \; \; n \in \NN.\]
If $T$ is surjective, then $K(T)=X$ \cite[Theorem 1.22]{aiena2}.

An operator $T\in L(X)$ is said to be {\em generalized Drazin
invertible}, if there exists $B \in L(X)$ such that
\[TB=BT, \; \; \; BTB=B, \; \; \; TBT-T \; \; \text{is quasinilpotent}.\]
\noindent The generalized Drazin spectrum of $T\in L(X)$ is defined
by
$$
\sigma_{gD}(T)=\{\lambda\in\CC: T-\lambda\ {\rm is\ not\
generalized\ Drazin\ invertible}\}.
$$

\noindent The equivalent conditions to the existence of generalized
Drazin inverse of a bounded  operator are collected in the following
theorem.
\begin{theorem}[see \cite{koliha}, \cite{M1}, \cite{schmoeger}, \cite{koliha2}, \cite{DajicKoliha}] \label{Kol-Dra} Let $T\in L(X)$.
The following conditions are equivalent:

\snoi {\rm (i)} $T$ is generalized Drazin invertible;

\snoi {\rm (ii)} There exists a bounded projection $P$ on $X$ which
commutes with $T$ such that $T+P$ is invertible and $TP$ is
quasinilpotent;

\snoi {\rm (iii)} $0\notin \acc\, \sigma (T)$;

\snoi {\rm (iv)} There is a bounded projection $P$ on $X$ such that
$R(P)=H_0(T)$ and $N(T)=K(T)$;

\snoi {\rm (v)} There exists $(M,N)\in Red(T)$ such that $ T_M$ is
invertible and $ T_N$ is quasinilpotent;

\snoi {\rm (vi)}   $X=K(T)\oplus H_0(T)$ with at least one of the
component spaces closed.
\end{theorem}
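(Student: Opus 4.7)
The plan is to establish the six equivalences by a cyclic chain (i) $\Longrightarrow$ (ii) $\Longrightarrow$ (v) $\Longrightarrow$ (iii) $\Longrightarrow$ (ii), together with a separate loop showing (v) $\Longleftrightarrow$ (vi) $\Longleftrightarrow$ (iv). The central analytic tool is the holomorphic functional calculus around the isolated spectral point $0$, which produces a Riesz projection commuting with $T$; from this projection all the structural decompositions in (ii), (iv), (v), (vi) follow by restriction, while (i) is obtained via the explicit formula for the inverse on the non-zero part.

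I would first verify (i) $\Longleftrightarrow$ (ii). Given a generalized Drazin inverse $B$, set $P := I - BT$; the identities $TB = BT$ and $BTB = B$ make $P$ a projection commuting with $T$, while $TP = T - TBT$ is quasinilpotent by the third axiom, and a short computation shows that $T + P$ is invertible. Conversely, starting from $P$ as in (ii), the operator $B := (I-P)(T+P)^{-1}$ satisfies the defining relations of a generalized Drazin inverse. Then (ii) $\Longrightarrow$ (v) is immediate by taking $M := N(P)$ and $N := R(P)$: both are closed and $T$-invariant since $P$ commutes with $T$, $T_M$ is invertible because $T+P$ acts as $T$ on $M$, and $T_N$ is quasinilpotent because $TP$ acts as $T_N$ on $N$. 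Finally (v) $\Longrightarrow$ (iii) follows from $\sigma(T) = \sigma(T_M) \cup \sigma(T_N) = \sigma(T_M) \cup \{0\}$ with $0 \notin \sigma(T_M)$, so that $0$ is isolated in $\sigma(T)$.

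The main step is (iii) $\Longrightarrow$ (ii). Assuming $0 \notin \acc\, \sigma(T)$, I choose a small positively oriented circle $\Gamma$ enclosing $0$ and separating it from $\sigma(T)\setminus\{0\}$, and define
\[
P := \frac{1}{2\pi i}\int_{\Gamma}(\lambda - T)^{-1}\,d\lambda.
\]
Standard spectral-projection arguments show that $P$ is a bounded projection commuting with $T$, that $\sigma(T|_{R(P)}) = \{0\}$, and that $\sigma(T|_{N(P)}) = \sigma(T) \setminus \{0\}$. The first gives that $TP$ is quasinilpotent, while the second gives invertibility of $T|_{N(P)}$; since $T|_{R(P)} + I$ has spectrum $\{1\}$, it is also invertible, and the two pieces combine to give invertibility of $T + P$ on $X$. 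This is the main obstacle, in the sense that the careful spectral splitting under the contour integral is the only genuinely analytic input in the argument.

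It remains to link (iv) and (vi) to the rest. Starting from the reducing pair $(M,N)$ of (v) with $T_M$ invertible and $T_N$ quasinilpotent, the criteria recorded earlier in the excerpt yield $H_0(T) = H_0(T_M) \oplus H_0(T_N) = \{0\} \oplus N = N$ and $K(T) = K(T_M) \oplus K(T_N) = M \oplus \{0\} = M$, so $X = K(T) \oplus H_0(T)$ with both summands closed; this is (vi), and the projection associated with this decomposition provides (iv). Conversely, given a decomposition as in (vi) or (iv), I would verify that both summands are $T$-invariant, show that $T|_{K(T)}$ is surjective and $T|_{H_0(T)}$ is quasinilpotent, and invoke the closed graph theorem together with the closedness of one summand to conclude that both summands are closed, thereby producing the pair required by (v) and closing the circle.
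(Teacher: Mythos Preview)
The paper does not supply a proof of this theorem at all: it is stated in the Preliminaries section as a known result, with the bracket ``see \cite{koliha}, \cite{M1}, \cite{schmoeger}, \cite{koliha2}, \cite{DajicKoliha}'' serving in lieu of proof, and the text moves directly on to the definition of the annihilator. So there is nothing in the paper to compare your argument against; the authors simply import the theorem from the literature and use it as a black box.

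As for the content of your outline, it is essentially the standard route through Koliha's work and is largely sound, with two points worth tightening. First, in (v)$\Rightarrow$(vi) you write $K(T)=K(T_M)\oplus K(T_N)$; in fact one needs the GKD hypothesis to conclude $K(T)=K(T_M)$ (this is the result the paper later cites as \cite[Theorem~1.41]{aiena2}), and $K(T_N)=\{0\}$ for quasinilpotent $T_N$ is a separate small lemma rather than an instance of a direct-sum formula. Second, your sketch of (vi)$\Rightarrow$(v) is the weakest link: the sentence ``invoke the closed graph theorem together with the closedness of one summand to conclude that both summands are closed'' does not hold for arbitrary algebraic complements, and the actual argument in the cited sources uses specific properties of $K(T)$ and $H_0(T)$ (e.g.\ that $H_0(T)$ closed forces $0$ to be isolated in $\sigma_{ap}(T)$, or that $K(T)$ closed makes it the range of the Riesz projection). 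You should also note that $T|_{K(T)}$ is injective, not merely surjective, because $N(T)\subset H_0(T)$ and $H_0(T)\cap K(T)=\{0\}$; only then does the open mapping theorem give invertibility.
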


For a subspace $M$ of $X$ its annihilator $M^\bot$ is defined by
$$
M^\bot=\{f\in X^\prime:f(x)=0\ {\rm for\ all\ } x\in M\}.
$$
Recall that if $M$ is closed, then
\begin{equation}\label{gp0}
 \dim M^\bot=\codim M.
\end{equation}
Let $M$ and $L$ be two  subspaces of $X$ and let
\[\delta(M,L)=\sup \{dist(u,L): u \in M, \, \|u\|=1\},\]
in the case that $M \neq \{0\}$, otherwise we define $\delta(\{0\},
L)=0$ for any subspace $L$. The {\em gap} between $M$ and $L$ is
defined by
\[\hat{\delta}(M,L)=\max\{\delta(M,L), \delta(L,M)\}. \]
It is known that \cite[corollary 10.10]{Mu}
\begin{equation}\label{gp1}
  \hat{\delta}(M,L)<1\Longrightarrow\dim M=\dim L.
\end{equation}
If $M$ and $L$ are closed subspaces of $X$, then  \cite[Theorem
10.8]{Mu}
\begin{equation}\label{gp2}
\hat{\delta}(M^\bot,L^\bot)= \hat{\delta}(M,L).
\end{equation}
Therefore, for closed subspaces $M$ and $L$  of $X$, according to
(\ref{gp0}), (\ref{gp1}) and (\ref{gp2}), there is implication
\begin{equation}\label{gp3}
  \hat{\delta}(M,L)<1\Longrightarrow\codim M=\codim L.
\end{equation}

We use the following notation.
\begin{center}
\begin{tabular}{|c|c|c|} \hline
${\bf R_1}=\Phi_+(X)$ & ${\bf R_2}=\Phi_-(X)$ & ${\bf R_3}=\Phi(X)$ \\
\hline
${\bf R_4}=\mathcal{W}_+(X)$ & ${\bf R_5}=\mathcal{W}_-(X)$ & ${\bf R_6}=\mathcal{W}(X)$ \\
\hline
${\bf R_7}=\mathcal{B}_+(X)$ & ${\bf R_8}=\mathcal{B}_-(X)$ & ${\bf R_9}=\mathcal{B}(X)$ \\
\hline
${\bf R_{10}}=\M(X)$ & ${\bf R_{11}}=\Q(X)$ & ${\bf R_{12}}=L(X)^{-1}$ \\
\hline
\end{tabular}
\end{center}
\noindent The sets ${\bf R}_i$, $1\leq i \leq 12$, are open in
$L(X)$ and contain $L(X)^{-1}$ (for the openness of the set of upper
(lower) semi-Browder operators see \cite[Satz 4]{KV}). The spectra
$\sigma_{{\bf R}_i}(T)=\{\lambda \in \mathbb{C}: T-\lambda \not \in
{\bf R}_i\}$, $1\leq i \leq 12$, are non-empty and compact subsets
of $\mathbb{C}$. We write $\sigma_{{\bf
R}_1}(T)=\sigma_{\Phi_+}(T)$, $\sigma_{{\bf
R}_2}(T)=\sigma_{\Phi_-}(T)$, etc, and
$\rho_{\Phi_+}(T)=\CC\setminus \sigma_{\Phi_+}(T)$,
$\rho_{\Phi_-}(T)=\CC\setminus \sigma_{\Phi_-}(T)$, etc. In
particular, $\sigma_{{\bf R}_{10}}(T)=\sigma_{ap}(T)$, $\sigma_{{\bf
R}_{11}}(T)=\sigma_{su}(T)$, $\rho_{ap}(T)=\CC\setminus
\sigma_{ap}(T)$ and $\rho_{su}(T)=\CC\setminus \sigma_{su}(T)$. We
consider the following classes of bounded linear operators:
\[{\bf g D R}_i=\left\{ T \in L(X) : \begin{array}{c} \text{there
exists} \, (M,N)\in Red(T) \; \text{such that} \\ T_M \in {\bf R}_i
\; \text{and} \; T_N \; \text{is quasinilpotent} \end{array}
\right\}, \; \; \; 1\leq i \leq 12.\] If $T_N$ mentioned in this
definition is nilpotent then it is said that $T$ belongs to the
class ${\bf DR}_i$, $1 \leq i \leq 12$. It is clear that ${\bf R}_i
\subset {\bf DR}_i \subset {\bf g D R}_i, 1 \leq i \leq 12$.

We shall say that $T\in L(X)$  is {\it generalized Drazin upper
semi-Fredholm } (resp. {\it generalized Drazin  lower semi-Fredholm,
generalized Drazin Fredholm,       generalized Drazin  upper
semi-Weyl, generalized Drazin  lower semi-Weyl, generalized Drazin
Weyl, generalized Drazin bounded below, generalized Drazin
surjective}) if $T\in {\bf g D \Phi_+}(X) $ (resp. ${\bf g D
\Phi_-}(X)$, ${\bf g D \Phi}(X)$, ${\bf g D \W_+}(X)$, ${\bf g D
\W_-}(X)$,  ${\bf g D \W}(X)$,  ${\bf g D \M}(X)$, $ {\bf g D
\Q}(X)$). The reason for introducing these names is that all these
classes generalize the class of generalized Drazin invertible
operators and, as we will see, may be characterized in a similar way
as the class of generalized Drazin invertible operators. We remark
that pseudo B-Fredholm operators and generalized Drazin Fredholm
operators coincide, as well as,  pseudo B-Weyl operators and
generalized Drazin Weyl operators.

The following technical lemma will be useful in the sequel.
\begin{lemma} \label{lema}
Let $T \in L(X)$ and $(M,N) \in Red(T)$. The following statements
hold:
\par \noindent {\rm (i)} $T \in {\bf R}_i$ if and only if $T_M
\in {\bf R}_i$ and $T_N \in {\bf R}_i$, $1\le i\le 3$ or $7\le i\le
12$, and in that case $\ind (T)=\ind(T_M)+\ind(T_N)$;\par \noindent
{\rm (ii)} If $T_M \in {\bf R}_i$ and $T_N \in {\bf R}_i$, then $T
\in {\bf R}_i$, $4 \leq i \leq 6$.

\snoi{\rm (iii)} If $T \in {\bf R}_i$ and $T_N$ is Weyl, then $T_M
\in {\bf R}_i$, $ 4\leq i \leq 6$.
\end{lemma}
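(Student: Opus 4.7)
The plan is to reduce the whole lemma to two elementary algebraic identities attached to the decomposition $X = M \oplus N$: for every $k \in \NN_0$,
$$N(T^k) = N(T_M^k) \oplus N(T_N^k) \quad\text{and}\quad R(T^k) = R(T_M^k) \oplus R(T_N^k),$$
both understood as internal topological direct sums since $M$ and $N$ are closed and complementary. These yield at once $\alpha(T^k) = \alpha(T_M^k) + \alpha(T_N^k)$ and $\beta(T^k) = \beta(T_M^k) + \beta(T_N^k)$, as well as the formulas $\asc(T) = \max\{\asc(T_M), \asc(T_N)\}$ and $\dsc(T) = \max\{\dsc(T_M), \dsc(T_N)\}$ (any non-strict inclusion $N(T_M^n) \subseteq N(T_M^{n+1})$ forces the direct-sum equality to split summandwise, and analogously for ranges).

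For part (i), the one missing ingredient is that $R(T)$ is closed in $X$ if and only if $R(T_M)$ is closed in $M$ and $R(T_N)$ is closed in $N$. One direction follows because a sum of two closed subspaces lying in two complementary closed subspaces is closed; for the other, $R(T_M) = R(T) \cap M$ is closed as an intersection of closed sets, and similarly for $R(T_N)$. With this in hand, the equivalences for ${\bf R}_1, {\bf R}_2, {\bf R}_3$ are immediate from the $\alpha$-$\beta$ additivity, and the index formula $\ind(T) = \ind(T_M) + \ind(T_N)$ drops out. The classes ${\bf R}_{10}, {\bf R}_{11}, {\bf R}_{12}$ follow from the fact that injectivity, surjectivity, and invertibility each split across the direct sum, while the Browder cases ${\bf R}_7, {\bf R}_8, {\bf R}_9$ come from combining the Fredholm equivalences with the $\max$-formulas for ascent and descent.

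For (ii), if $T_M, T_N \in \W_+(X)$ then both are upper semi-Fredholm with nonpositive index, so by (i) $T \in \Phi_+(X)$ with $\ind(T) = \ind(T_M) + \ind(T_N) \leq 0$, i.e.\ $T \in \W_+(X)$; flipping signs handles $\W_-(X)$ and $\W(X)$. For (iii), assuming $T \in \W_+(X)$ and $T_N$ Weyl, part (i) already forces $T_M \in \Phi_+(X)$, and $\ind(T_M) = \ind(T) - \ind(T_N) = \ind(T) \leq 0$ gives $T_M \in \W_+(X)$; the arguments for $\W_-(X)$ and $\W(X)$ are identical \emph{mutatis mutandis}. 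Note that the converse direction of (i) fails for the Weyl classes precisely because the equation $\ind(T_M) + \ind(T_N) \le 0$ does not force each summand to be nonpositive, which is why parts (ii) and (iii) have to be stated separately.

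The only genuine technical point is the closed-range equivalence above; it relies on the fact that the projections onto $M$ and $N$ along each other are bounded, so the topology on $X$ coincides with the product topology on $M \times N$. Everything else is bookkeeping with the direct-sum decompositions of the iterated kernels and ranges of $T$.
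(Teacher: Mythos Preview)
Your proof is correct and follows essentially the same approach as the paper's: both arguments reduce everything to the direct-sum identities $N(T^k)=N(T_M^k)\oplus N(T_N^k)$ and $R(T^k)=R(T_M^k)\oplus R(T_N^k)$, derive the additivity of $\alpha$ and $\beta$, the $\max$-formulas for ascent and descent, and then read off the class-by-class equivalences and the index formula. The only substantive difference is that the paper cites \cite[Lemma 3.3]{kinezi} for the closed-range equivalence, whereas you supply a direct argument via $R(T_M)=R(T)\cap M$ and the boundedness of the coordinate projections; your route is slightly more self-contained but otherwise the proofs coincide.
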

\begin{proof}
{\rm (i)}: From the equalities $N(T)=N(T_M)\oplus N(T_N)$ and
$R(T)=R(T_M)\oplus R(T_N)$ it follows that
$\alpha(T)=\alpha(T_M)+\alpha(T_N)$ and
$\beta(T)=\beta(T_M)+\beta(T_N)$. It implies that
  $\alpha(T)<\infty$ if and only if
$\alpha(T_M)<\infty$ and $\alpha(T_N)<\infty$, and also,
$\beta(T)<\infty$ if and only if $\beta(T_M)<\infty$ and
$\beta(T_N)<\infty$. It is known that $R(T)$ is closed if and only
if $R(T_M)$ and $R(T_N)$ are closed \cite[Lemma 3.3]{kinezi}.
Therefore $T$ is bounded below (surjective, upper semi-Fredholm,
lower semi-Fredholm) if and only if $T_M$ and $T_N$ are bounded
below (surjective, upper semi-Fredholm, lower semi-Fredholm), and in
that case $\ind (T)=\alpha (T)-\beta
(T)=(\alpha(T_M)+\alpha(T_N))-(\beta(T_M)+\beta(T_N))=\ind(T_M)+\ind(T_M)$.

Since $N(T^n)=N(T_M^n)\oplus N(T_N^n)$, for every $n\in \NN$, we
conclude that $\asc(T)<\infty$ if and only if $\asc(T_M)<\infty$ and
$\asc(T_N)<\infty$, with $\asc(T)={\rm max}\{\asc(T_M),\asc(T_N)\}$.
Similarly,  as  $R(T^n)=R(T_M^n)\oplus R(T_N^n)$, $n\in\NN$, we get
that  $\dsc(T)<\infty$ if and only if $\dsc(T_M)<\infty$ and
$\dsc(T_N)<\infty$, with $\dsc(T)={\rm max}\{\dsc(T_M),\dsc(T_N)\}$.

\smallskip

{\rm (ii)}: Follows from {\rm (i)}.

{\rm (iii)}: Suppose that $T\in\W_+(X)$ and that $T_N$ is Weyl.
According to {\rm (i)} it follows that $T_M\in\Phi_+(X)$ and $\ind
(T_M)=\ind(T_M)+\ind(T_N)=\ind (T)\le 0$. Thus $T_M$ is upper
semi-Weyl. The cases $i=5$ and $i=6$ can be proved similarly.
\end{proof}

\section{Main results}

We start with the result which is essential for our work.

\begin{proposition} \label{gap0}
Let $T \in L(X)$. Then the following implications hold:
\par \noindent {\rm (i)} If $T$ is Kato and $0\in {\rm acc\, }\rho_{\Phi_+}(T)$ ($0\in {\rm acc\, }\rho_{\Phi_-}(T)$), then $T$ is upper (lower) semi-Fredholm;

\par \noindent {\rm (ii)} If $T$ is Kato and $0\in {\rm acc\, }\rho_{\W_+}(T)$ ($0\in {\rm acc\, }\rho_{\W_-}(T)$), then $T$ is upper (lower) semi-Weyl;

\par \noindent {\rm (iii)} If $T$ is Kato and $0\in {\rm acc\, }\rho_{ap}(T)$ ($0\in {\rm acc\, }\rho_{su}(T)$), then $T$ is bounded below (surjective);

\par \noindent {\rm (iv)} If $T$ is Kato and $0\in {\rm acc\, }\rho_{\B_+}(T)$ ($0\in {\rm acc\, }\rho_{\B_-}(T)$), then $T$ is bounded below (surjective).
\end{proposition}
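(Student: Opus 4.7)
The plan is to combine the classical stability of Kato (= semi-regular) operators under the perturbations $T\mapsto T-\lambda$ with the gap estimates \eqref{gp1} and \eqref{gp3}. The key input I would use (cf.\ \cite{Mu}) is: if $T$ is Kato, there is $\varepsilon>0$ such that $T-\lambda$ is Kato for every $|\lambda|<\varepsilon$, and moreover $\hat{\delta}(N(T-\lambda),N(T))\to 0$ and $\hat{\delta}(R(T-\lambda),R(T))\to 0$ as $\lambda\to 0$. Therefore, for $|\lambda|$ sufficiently small both gaps are $<1$, so \eqref{gp1} gives $\alpha(T)=\alpha(T-\lambda)$, while \eqref{gp3} gives $\beta(T)=\beta(T-\lambda)$. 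Given this, part (i) is immediate: picking $\lambda_n\to 0$ with $T-\lambda_n\in\Phi_+(X)$, the equality $\alpha(T)=\alpha(T-\lambda_n)<\infty$ for large $n$, together with closedness of $R(T)$ from the Kato hypothesis, yields $T\in\Phi_+(X)$. Part (ii) then follows, because $\Phi_+(X)$ is open and the index is locally constant on it, so $\ind(T)=\ind(T-\lambda_n)\le 0$. The lower and/or surjective variants are handled symmetrically, using $\beta$ in place of $\alpha$.

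For (iii), $\lambda_n\in\rho_{ap}(T)$ forces $\alpha(T-\lambda_n)=0$, hence $\alpha(T)=0$; together with closedness of $R(T)$ this shows $T$ is bounded below, and analogously for the surjective case via $\beta$. For (iv), I would reduce to (iii): for small $\lambda_n\neq 0$ the stability part of the perturbation theorem makes $T-\lambda_n$ Kato, and a Kato operator with finite ascent is automatically injective. Indeed, if $x\in N(T-\lambda_n)$ and $k=\asc(T-\lambda_n)$, then by the Kato property $x\in R((T-\lambda_n)^{k+1})$, so $x=(T-\lambda_n)^{k+1}y$; from $(T-\lambda_n)x=0$ we get $y\in N((T-\lambda_n)^{k+2})=N((T-\lambda_n)^{k+1})$, i.e.\ $x=0$. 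Thus $T-\lambda_n$ is upper semi-Fredholm and injective, so bounded below, reducing (iv) to (iii); the $\B_-$ variant is symmetric, using $\dsc<\infty$ together with Kato to force surjectivity.

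The main obstacle is arranging clean access to the gap continuity of $\lambda\mapsto N(T-\lambda)$ and $\lambda\mapsto R(T-\lambda)$ at $\lambda=0$ for Kato $T$; this is the substantive analytic input and pins the whole argument on Müller's treatment in \cite{Mu}. Once that is in place, the remaining work is bookkeeping through \eqref{gp1}, \eqref{gp3}, openness of the various classes ${\bf R}_i$, and the stability of the index.
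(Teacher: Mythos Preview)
Your proposal is correct and follows essentially the same route as the paper: both hinge on M\"uller's gap continuity for Kato operators (\cite[Theorem~12.2, Corollary~12.4]{Mu}) to obtain $\alpha(T)=\alpha(T-\lambda)$ and $\beta(T)=\beta(T-\lambda)$ for small $\lambda$, and then read off (i)--(iii) directly. The only noteworthy difference is in (iv): the paper simply cites \cite[Lemma~20.9]{Mu} for the fact that a Kato upper semi-Browder operator is bounded below, whereas you supply a short self-contained argument (Kato plus finite ascent forces injectivity); your argument is exactly the content of that lemma, so the two proofs coincide in substance.
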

\begin{proof} {\rm (i)}:  Suppose that $T$ is Kato. According to \cite[Corollary 12.4]{Mu}, $T-\lambda$ is Kato for all $\lambda$ in a neighborhood of $0$. From \cite[Theorem 12.2]{Mu} it follows that $\lim\limits_{\lambda\to 0}\hd (N(T),N(T-\lambda))=0$ and $\lim\limits_{\lambda\to 0}\hd (R(T-\lambda),R(T))=0$ and hence, there exists  $\delta>0$ such that for  all $|\lambda|<\delta$ it holds that
 \begin{equation}\label{gap-0i}
 \hd (N(T),N(T-\lambda))<1,\  \ \hd (R(T-\lambda),R(T))<1,
 \end{equation}
 \begin{equation}\label{gap-00i}
   T-\lambda\ {\rm is\ Kato},
 \end{equation}
 ${\rm and\ hence\ }R(T-\lambda)\ {\rm is\ closed}$.  By \cite[Corollary 10.10]{Mu}, (\ref{gap-0i}), (\ref{gap-00i}) and (\ref{gp3}), for all $|\lambda|<\delta$  the following holds
\begin{equation}\label{gap-i}
  \dim N(T-\lambda)=\dim N(T)\  \ {\rm i}\ \  \codim R(T)=\codim R(T-\lambda).
\end{equation}
 Suppose now that $0\in {\rm acc\, }\rho_{\Phi_+}(T)$ ($0\in {\rm acc\, }\rho_{\Phi_-}(T)$). Then there exists $\mu$ such that $0<|\mu|<\delta$ and $T-\mu$ is upper semi-Fredholm (lower semi-Fredholm), and so from (\ref{gap-i}) it follows that $\dim N(T)=\dim N(T-\mu)<+\infty$ ($\codim R(T)=\codim  R(T-\mu)<+\infty$). Consequently, $T$ is upper semi-Fredholm (lower semi-Fredholm).

\snoi  {\rm (ii)}: Suppose that $T$ is Kato and $0\in {\rm acc\,
}\rho_{\W_+}(T)$. Then there exists  $\delta>0$  such that
(\ref{gap-00i}) and  (\ref{gap-i}) hold for all $|\lambda|<\delta$.
Since $0\in {\rm acc\, }\rho_{\W_+}(T)$ , there exists $\mu$ such
that $0<|\mu|<\delta$ and $T-\mu$ is upper semi-Weyl.  From
(\ref{gap-i}) it follows that $\dim N(T)=\dim N(T-\mu)<\infty$,
$\codim R(T)=\codim  R(T-\mu)$ and $\i(T)=\dim N(T)-\codim R(T)=\dim
N(T-\mu)-\codim R(T-\mu)=\i(T-\mu)\le 0$ and so $T$ is upper
semi-Weyl. The proof of $T$ being lower semi-Weyl if $T$ is Kato and
$0\in {\rm acc\, }\rho_{\W_-}(T)$ is analogous.

\snoi  {\rm (iii)}: Let $T$ be Kato and let $0\in {\rm acc\,
}\rho_{ap}(T)$ ($0\in {\rm acc\, }\rho_{su}(T)$). Dealing as in the
previous part of the proof we find $\delta>0$ and $\mu\in\CC$,
$0<|\mu|<\delta$, such that
 (\ref{gap-00i}) and  (\ref{gap-i}) hold for all $|\lambda|<\delta$   and $T-\mu$ is bounded below (surjective).
From (\ref{gap-i}) it follows that $\dim N(T)=\dim N(T-\mu)=0$
($\codim R(T)=\codim  R(T-\mu)=0$), and so $T$ is bounded below
(surjective).

\snoi  {\rm (iv)}: Let $T$ be Kato and let $0\in {\rm acc\,
}\rho_{\B_+}(T)$ ($0\in {\rm acc\, }\rho_{\B_-}(T)$). Then there
exists  $\delta>0$  such that
 (\ref{gap-00i}) and  (\ref{gap-i}) hold for all $|\lambda|<\delta$. From $0\in {\rm acc\, }\rho_{\B_+}(T)$ ($0\in {\rm acc\, }\rho_{\B_-}(T)$) it follows that there exists $\mu$ such that $0<|\mu|<\delta$ and $T-\mu$ is upper semi-Browder (lower  semi-Browder). Since $T-\mu$ is Kato, from \cite[Lemma 20.9]{Mu} it follows that $T-\mu $  is bounded below (surjective). Now, as in the previous part of the proof, we can conclude that $T$ is bounded below (surjective).
\end{proof}

\begin{proposition} \label{open}
Let $T \in L(X)$ and $1\leq i \leq 12$. If $T$ belongs to the set
 ${\bf g D R}_i$, then $0 \not \in \acc \, \sigma_{{\bf R}_i}(T)$.
\end{proposition}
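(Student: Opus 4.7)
The plan is to exploit the two halves of the direct sum separately. By hypothesis, $T=T_M\oplus T_N$ with $T_M\in {\bf R}_i$ and $T_N$ quasinilpotent. We want to show that $T-\lambda \in {\bf R}_i$ for every sufficiently small nonzero $\lambda$, which immediately gives $0\notin\acc\,\sigma_{{\bf R}_i}(T)$.

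First I would handle the $M$-summand: since each class ${\bf R}_i$ is open in $L(X)$ (as noted in the preliminaries), and $T_M\in {\bf R}_i$, there exists $\delta>0$ such that $T_M-\lambda\in {\bf R}_i$ for all $|\lambda|<\delta$. Next I would handle the $N$-summand using quasinilpotence: $\sigma(T_N)=\{0\}$, so for every $\lambda\neq 0$ the operator $T_N-\lambda$ is invertible, hence lies in $L(N)^{-1}\subset {\bf R}_i$ (invertible operators belong to each of the twelve classes).

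The final step is to reassemble the direct sum $T-\lambda=(T_M-\lambda)\oplus(T_N-\lambda)$ for $0<|\lambda|<\delta$ and invoke Lemma \ref{lema}. For $i\in\{1,2,3,7,8,9,10,11,12\}$ this is part (i) of the lemma, which gives the equivalence between $T-\lambda\in {\bf R}_i$ and the membership of both restrictions. For the Weyl cases $i\in\{4,5,6\}$ the equivalence fails in general, but here I only need the ``if'' direction, which is part (ii) of the lemma. In either case one concludes $T-\lambda\in {\bf R}_i$ for all $\lambda$ with $0<|\lambda|<\delta$, so a punctured neighbourhood of $0$ lies in $\rho_{{\bf R}_i}(T)$, giving $0\notin\acc\,\sigma_{{\bf R}_i}(T)$.

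There is no real obstacle; the only subtle point is the Weyl trio, where one must remember that Lemma \ref{lema}(i) does not apply and one has to appeal to Lemma \ref{lema}(ii) instead. The direction needed is precisely what part (ii) provides, so the argument goes through uniformly for all twelve classes.
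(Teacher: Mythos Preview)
Your proof is correct and follows essentially the same approach as the paper: use the openness of ${\bf R}_i$ on the $M$-summand, the quasinilpotence of $T_N$ on the $N$-summand, and combine via Lemma~\ref{lema}. Your explicit separation of the cases $i\in\{1,2,3,7,8,9,10,11,12\}$ (part~(i)) versus $i\in\{4,5,6\}$ (part~(ii)) is slightly more detailed than the paper, which simply cites Lemma~\ref{lema} without distinguishing, but the argument is the same.
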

\begin{proof}
Let $(M,N) \in Red(T)$ such that $T_M \in {\bf R}_i$ and $T_N$ is
quasinilpotent. Since ${\bf R}_i$ is open,  there exists
$\epsilon>0$ such that $(T-\lambda)_M=T_M-\lambda \in {\bf R}_i$ for
$|\lambda|<\epsilon$. On the other hand, $(T-\lambda)_N=T_N-\lambda
\in L(X)^{-1} \subset {\bf R}_i$ for every $\lambda \neq 0$. Now by
applying   Lemma ďż˝\ref{lema} we obtain that  $T-\lambda \in
{\bf R}_i$ for $0<|\lambda|<\epsilon$, and so $0 \not \in \acc \,
\sigma_{{\bf R}_i}(T)$.
\end{proof}
We now state the first main result.

\begin{theorem} \label{glavna}
Let $T \in L(X)$ and $1 \leq i \leq 6$. The following conditions are
equivalent:\par \noindent {\rm (i)} There exists $(M,N)\in Red(T)$
such that $ T_M\in {\bf R}_i$  and $ T_N$ is quasinilpotent, that is
 $T \in {\bf g D R}_i$;\par
\noindent {\rm (ii)} $T$ admits a GKD and $0 \not \in \acc \,
\sigma_{{\bf R}_i}(T)$;
\par\noindent {\rm (iii)} $T$ admits a GKD and
$0  \notin \int \, \sigma_{{\bf R}_i}(T)$;

\par \noindent {\rm
(iv)} There exists a projection $P \in L(X)$ that commutes with $T$
such that $T+P \in {\bf R}_i$ and $TP$ is quasinilpotent.
\end{theorem}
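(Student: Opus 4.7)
The plan is to prove the cycle (i)$\Rightarrow$(ii)$\Rightarrow$(iii)$\Rightarrow$(i) and then establish (i)$\Leftrightarrow$(iv) separately.

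For (i)$\Rightarrow$(ii), the condition $0\notin\acc\,\sigma_{{\bf R}_i}(T)$ is immediate from Proposition \ref{open}. To produce a GKD for $T$, use that every operator in ${\bf R}_i$ is essentially Kato (as recorded after the definition); hence $T_M$ admits a GKD $(M_1,N_1)$ with $T_{M_1}$ Kato and $T_{N_1}$ nilpotent, and $(M_1,\,N_1\oplus N)$ is then a GKD for $T$ because the restriction of $T$ to $N_1\oplus N$ is the direct sum of a nilpotent and a quasinilpotent operator, hence quasinilpotent. The implication (ii)$\Rightarrow$(iii) is trivial since interior points of any subset of $\CC$ are accumulation points.

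For the main implication (iii)$\Rightarrow$(i), fix a GKD $(M,N)\in Red(T)$ with $T_M$ Kato and $T_N$ quasinilpotent. Since $\sigma(T_N)=\{0\}$, the operator $T_N-\lambda$ is invertible for every $\lambda\neq 0$, hence in particular lies in every class ${\bf R}_i$ and is Weyl. Applying Lemma \ref{lema}: part (i) covers $i\in\{1,2,3\}$; for $i\in\{4,5,6\}$ part (ii) gives the $\Leftarrow$ direction and part (iii) the $\Rightarrow$ direction (invoking that $T_N-\lambda$ is Weyl). In either range,
$$T-\lambda\in{\bf R}_i\;\Longleftrightarrow\;T_M-\lambda\in{\bf R}_i\quad(\lambda\neq 0).$$
Next, $0\notin\int\,\sigma_{{\bf R}_i}(T)$ means either $0\in\rho_{{\bf R}_i}(T)$ (in which case openness of ${\bf R}_i$ puts a whole deleted neighborhood of $0$ in $\rho_{{\bf R}_i}(T)$) or $0\in\partial\sigma_{{\bf R}_i}(T)$ (in which case $0$ is a limit of points in the complement since $\sigma_{{\bf R}_i}(T)$ is closed); in either case $0\in\acc\,\rho_{{\bf R}_i}(T)$, and therefore $0\in\acc\,\rho_{{\bf R}_i}(T_M)$. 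Now apply Proposition \ref{gap0}: for $i\in\{1,2,4,5\}$ the matching clause immediately yields $T_M\in{\bf R}_i$; for $i=3$ (resp.\ $i=6$) use that $\rho_\Phi(T_M)\subset\rho_{\Phi_+}(T_M)\cap\rho_{\Phi_-}(T_M)$ (resp.\ analogously for Weyl) so that clauses (i) (resp.\ (ii)) applied to both semi-Fredholm types (resp.\ both semi-Weyl types) give that $T_M$ is Fredholm (resp.\ Weyl). Hence $(M,N)$ witnesses $T\in{\bf gDR}_i$.

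For (i)$\Leftrightarrow$(iv), given (i) take $P$ to be the projection on $N$ along $M$; then $P$ commutes with $T$, $TP=0\oplus T_N$ is quasinilpotent, and $T+P=T_M\oplus(T_N+I_N)$ where $T_N+I_N$ is invertible because $-1\notin\sigma(T_N)=\{0\}$, so Lemma \ref{lema} (parts (i), (ii) as appropriate) gives $T+P\in{\bf R}_i$. Conversely, given (iv), set $M=N(P)$ and $N=R(P)$; these are $T$-invariant and give $X=M\oplus N$, while $TP=0_M\oplus T_N$ forces $T_N$ quasinilpotent and $T+P=T_M\oplus(T_N+I_N)\in{\bf R}_i$ combined with Lemma \ref{lema} (part (i) for $i\le 3$, part (iii) for $i=4,5,6$ using that the invertible $T_N+I_N$ is Weyl) yields $T_M\in{\bf R}_i$.

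The main obstacle is the step (iii)$\Rightarrow$(i): the reduction via Lemma \ref{lema} is routine once one notes that the quasinilpotent summand decouples for $\lambda\neq 0$, but converting the topological hypothesis on $\sigma_{{\bf R}_i}(T)$ into membership of the Kato operator $T_M$ in ${\bf R}_i$ is exactly the gap-theoretic content of Proposition \ref{gap0}, and for the Fredholm and Weyl cases ($i=3,6$) one must split into the upper/lower subcases to apply that proposition.
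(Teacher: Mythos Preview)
Your proof is correct and matches the paper's approach in all essentials: the same cycle (i)$\Rightarrow$(ii)$\Rightarrow$(iii)$\Rightarrow$(i) via Proposition~\ref{open}, the essentially Kato decomposition of $T_M$, Lemma~\ref{lema} to pass from $T$ to $T_M$, and Proposition~\ref{gap0}, together with the same projection argument for (i)$\Leftrightarrow$(iv). The only cosmetic difference is that you spell out the $i=3,6$ upper/lower split explicitly, whereas the paper handles $i\in\{1,2,3\}$ in one line and leaves $i=5,6$ to ``similarly''.
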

\begin{proof}
{\rm (i)} $\Longrightarrow$ {\rm (ii)}:  Let   $T=T_M \oplus T_N$,
where $T_M\in {\bf R}_i$ and $T_N$ is quasinilpotent. Then $0 \not
\in \acc \, \sigma_{{\bf R}_i}(T)$ by Proposition  ďż˝\ref{open}.
From   \cite[Theorem 16.21]{Mu} it follows that there exist two
closed $T$-invariant subspaces $M_1$ and $M_2$ such that $M=M_1
\oplus M_2, \, M_2 \, \text{is finite dimensional}$, $T_{M_1}$ is
Kato and $T_{M_2}$ is nilpotent. We have $X=M_1 \oplus (M_2 \oplus
N)$, $M_2 \oplus N$ is closed, $T_{M_2 \oplus N}=T_{M_2} \oplus T_N$
is quasinilpotent and thus $T$ admits the GKD $(M_1,M_2 \oplus N)$.

{\rm (ii)} $\Longrightarrow$ {\rm (iii)}: Clear.
\smallskip

{\rm (iii)} $\Longrightarrow$ {\rm (i)}: Let $i\in\{1,2,3\}$. Assume
that $T$ admits a GKD and $0\notin\int\, \sigma_{{\bf R}_i}(T)$,
that is $0  \in \acc \, \rho_{{\bf R}_i}(T)$. Then  there exists
$(M,N) \in Red(T)$ such that $T_M$ is Kato and $T_N$ is
quasinilpotent, and also, because of $0  \in \acc \, \rho_{{\bf
R}_i}(T)$, according to Lema \ref{lema}{\rm (i)}, it follows that $0
\in \acc \, \rho_{{\bf R}_i}(T_M)$. From Proposition \ref{gap0}{\rm
(i)}  it follows that $T_M\in {\bf R}_i$, and so $T \in {\bf g D
R}_i(X)$.

Suppose that $T$ admits a GKD and $0\notin\int\, \sigma_{\W_+}(T)$,
i.e. $0  \in \acc \, \rho_{\W_+}(T)$. Then  there exists $(M,N) \in
Red(T)$ such that $T_M$ is Kato and $T_N$ is quasinilpotent. We show
that $0  \in \acc \, \rho_{\W_+}(T_M)$. Let $\epsilon>0$. From $0
\in \acc \, \rho_{\W_+}(T)$ it follows that there exists
$\lambda\in\CC$ such that $0<|\lambda|<\epsilon$  and
$T-\lambda\in\W_+(X)$. As $T_N$ is quasinilpotent, $T_N- \lambda$ is
invertible, and so, according to Lema \ref{lema}{\rm (iii)}, we
conclude that $T_M-\lambda\in\W_+(M)$, that is $\lambda\in
\rho_{\W_+}(T_M)$. Therefore, $0  \in \acc \, \rho_{\W_+}(T_M)$ and
from Proposition \ref{gap0}{\rm (ii)}  it follows that $T_M$
 is upper semi-Weyl,  and so $T \in {\bf g D \W_+}(X)$. The cases $i=5$ and $i=6$ can be proved similarly.

\par
\smallskip

 {\rm (i)} $\Longrightarrow$ {\rm (iv)}: Suppose that there exists
$(M,N) \in Red(T)$ such that $T_M\in {\bf R}_i$  and $T_N$ is
quasinilpotent. Let $P \in L(X)$ be a projection such that $N(P)=M$
and $R(P)=N$.  Then $TP=PT$ and every element $x \in X$ may be
represented as $x=x_1+x_2$, where $x_1 \in M$ and $x_2 \in N$. Also,
\[\|(TP)^nx\|^{\frac{1}{n}}=\|T^nPx\|^{\frac{1}{n}}=\|(T_N)^nx_2\|^{\frac{1}{n}} \to 0 \; (n \to \infty),\]
since $T_N$ is quasinilpotent. We obtain $H_0(TP)=X$, so $TP$ is
quasinilpotent. Since $(T+P)_M=T_M $ and $(T+P)_N=T_N+I_N \in
L(N)^{-1}$, where $I_N$ is identity on $N$, we have  that
$(T+P)_M\in {\bf R}_i$ and $(T+P)_N\in {\bf R}_i$ and hence, $T+P\in
{\bf R}_i $ by Lemma ďż˝\ref{lema}{\rm (i)} and {\rm (ii)}.

\smallskip

{\rm (iv)} $\Longrightarrow$ {\rm (i)}:  Assume  that there exists a
projection $P \in L(X)$ that commutes with $T$ such that $T+P \in
{\bf R}_i$  and $TP$ is quasinilpotent. Put $N(P)=M$ and $R(P)=N$.
Then $X=M \oplus N$, $T(M) \subset M$ and $T(N) \subset N$. For
every $x \in N$ we have
\[\|(T_N)^nx\|^{\frac{1}{n}}=\|T^nP^nx\|^{\frac{1}{n}}=\|(TP)^nx\|^{\frac{1}{n}} \to 0 \; (n \to
\infty), \] since $TP$ is quasinilpotent. It follows that
$H_0(T_N)=N$ and hence, $T_N$ is quasinilpotent. It remains to prove
that $T_M\in {\bf R}_i$. For $i\in\{1,2,3\}$, by Lemma
ďż˝\ref{lema}{\rm (i)} we deduce that  $T_M=(T+P)_M\in {\bf R}_i
$. Set $i=4$. Since  $T_N$ is quasinilpotent, it follows that
$T_N+I_N$ is invertible, where $I_N$ is identity on $N$. From $T+P
\in \mathcal{W}_+(X)$ and  the decomposition
\begin{equation*}
T+P=(T+P)_M \oplus (T+P)_N=T_M \oplus (T_N+I_N),
\end{equation*}
according to Lemma ďż˝\ref{lema}{\rm (iii)},  we conclude  that
$T_M \in \mathcal{W}_+(M)$. For $i=5$ and $i=6$ we apply similar
consideration.
\end{proof}
\noindent Q. Jiang and H. Zhong show in \cite{kinezi} that if $T \in
L(X)$ admits a GKD, $\sigma_{ap}(T)$ ($\sigma_{su}(T)$) does not
cluster at $0$ if and only if $0$ is not an interior point of
$\sigma_{ap}(T)$ ($\sigma_{su}(T)$). In Theorems ˜\ref{t1} and
˜\ref{t2} we obtain their result in a different way and also provide
further conditions that are equivalent to the previous two.
\begin{theorem} \label{t1}
Let $T \in L(X)$. The following conditions are equivalent:
\par \noindent {\rm (i)} $H_0(T)$ is
closed and there exists a closed subspace $M$ of $X$ such that
$(M,H_0(T))\in Red(T)$ and $T(M)$ is closed;

\par \noindent {\rm
(ii)} There exists $(M,N)\in Red(T)$ such that $ T_M$ is bounded
below and $ T_N$ is quasinilpotent, that is
 $T\in {\bf g D\mathcal{M}}(X)$;


\par\noindent {\rm (iii)} $T$ admits a GKD and
$0\notin{\rm acc}\, \sigma_{ap}(T)$;

\par\noindent {\rm (iv)} $T$ admits a GKD and
$0\notin{\rm int}\, \sigma_{ap}(T)$;

\par \noindent {\rm (v)}
There exists a bounded projection $P$ on $X$ which commutes with $T$
such that $T+P$ is bounded below and $TP$ is quasinilpotent;

\par \noindent {\rm
(vi)} There exists $(M,N)\in Red(T)$ such that $ T_M$ is upper
semi-Browder and $ T_N$ is quasinilpotent, that is $T\in {\bf g
D{\mathcal{B}_+}}(X)$;

\par \noindent {\rm (vii)} $T$ admits a GKD  and
$0\notin {\rm acc}\, \sigma_{\mathcal{B}_+}(T)$;

\par \noindent {\rm (viii)} $T$ admits a GKD  and
$0\notin {\rm int}\, \sigma_{\mathcal{B}_+}(T)$;

\par \noindent {\rm (ix)}
There exists a bounded projection $P$ on $X$ which commutes with $T$
such that $T+P$ is upper semi-Browder and $TP$ is quasinilpotent.

\par In particular, if $T$ satisfies any of the conditions {\rm (i)}-{\rm (ix)},
then the subspace $N$ in {\rm (ii)} is uniquely determined and
$N=H_0(T)$.
\end{theorem}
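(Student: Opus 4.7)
The plan is to run the proof of Theorem \ref{glavna} in parallel for the two classes $\M(X)={\bf R}_{10}$ and $\B_+(X)={\bf R}_7$, using the appropriate parts of Proposition \ref{gap0}, and then to fuse the two resulting chains of equivalences by exploiting the strength of Proposition \ref{gap0}{\rm (iv)}. The three sub-tasks are: prove {\rm (ii)}$\Leftrightarrow${\rm (iii)}$\Leftrightarrow${\rm (iv)}$\Leftrightarrow${\rm (v)} (the $\M$-chain), prove {\rm (vi)}$\Leftrightarrow${\rm (vii)}$\Leftrightarrow${\rm (viii)}$\Leftrightarrow${\rm (ix)} (the $\B_+$-chain), and then link the two chains together while also handling {\rm (i)}.

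For the $\M$-chain I would mimic Theorem \ref{glavna}. For {\rm (ii)}$\Rightarrow${\rm (iii)}, Proposition \ref{open} with $i=10$ gives the spectral condition, and the GKD is obtained as in Theorem \ref{glavna}, since bounded below operators are essentially Kato by \cite[Theorem 16.21]{Mu}. The step {\rm (iii)}$\Rightarrow${\rm (iv)} is immediate. For {\rm (iv)}$\Rightarrow${\rm (ii)}, given a GKD $(M,N)$ with $T_M$ Kato, $T_N$ quasinilpotent and $0\in\acc\,\rho_{ap}(T)$, I use that $T_N-\lambda$ is invertible for small nonzero $\lambda$ to transfer the accumulation condition to $T_M$, and then conclude that $T_M$ is bounded below by Proposition \ref{gap0}{\rm (iii)}. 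The equivalence {\rm (ii)}$\Leftrightarrow${\rm (v)} is the projection argument of Theorem \ref{glavna}: choose $P$ with $N(P)=M$, $R(P)=N$; then $TP$ is quasinilpotent and $T+P=T_M\oplus(T_N+I_N)$ is bounded below, since both summands are.

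The $\B_+$-chain follows the same scheme, but with Proposition \ref{open} applied to $i=7$ and Proposition \ref{gap0}{\rm (iv)} in place of {\rm (iii)}. The crucial feature is that Proposition \ref{gap0}{\rm (iv)} delivers more than might be expected: if $T_M$ is Kato and $0\in\acc\,\rho_{\B_+}(T_M)$, then $T_M$ is already bounded below, not merely upper semi-Browder. This collapses the two chains into one: {\rm (ii)}$\Rightarrow${\rm (vi)} is trivial since bounded below implies upper semi-Browder, and {\rm (viii)}$\Rightarrow${\rm (ii)} follows by repeating the GKD transfer argument and invoking Proposition \ref{gap0}{\rm (iv)}.

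Finally, for {\rm (i)}$\Leftrightarrow${\rm (ii)} and uniqueness: if {\rm (ii)} holds with $T=T_M\oplus T_N$, then $H_0(T_M)=\{0\}$ (since $T_M$ is bounded below) and $H_0(T_N)=N$ (since $T_N$ is quasinilpotent), so $H_0(T)=H_0(T_M)\oplus H_0(T_N)=N$ is closed, $M$ complements it in $X$, and $T(M)=R(T_M)$ is closed, yielding {\rm (i)} and pinning down $N$ as $H_0(T)$. Conversely, assuming {\rm (i)} with $N:=H_0(T)$, the $T$-invariance of $N$ gives $H_0(T_N)=N$ at once, so $T_N$ is quasinilpotent; and $N(T)\subset H_0(T)=N$ forces $N(T_M)\subset M\cap N=\{0\}$, which combined with closedness of $T(M)=R(T_M)$ makes $T_M$ bounded below. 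The main obstacle is verifying the collapse of the $\B_+$-chain into the $\M$-chain: one has to be careful that Proposition \ref{gap0}{\rm (iv)} really upgrades ``upper semi-Browder at nearby points'' to ``bounded below at $0$'' through the Kato hypothesis, since otherwise the nine conditions would split into two separate equivalence classes.
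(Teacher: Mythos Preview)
Your proposal is correct and follows essentially the same route as the paper: the paper proves {\rm (i)}$\Leftrightarrow${\rm (ii)} exactly as you do (using $H_0(T_M)=\{0\}$, $H_0(T_N)=N$, hence $H_0(T)=N$), establishes the $\M$-chain and the $\B_+$-chain by mimicking Theorem~\ref{glavna} with Proposition~\ref{open} and Proposition~\ref{gap0}{\rm (iii)},{\rm (iv)} respectively, and fuses them via the trivial implication {\rm (ii)}$\Rightarrow${\rm (vi)} together with {\rm (viii)}$\Rightarrow${\rm (ii)} through Proposition~\ref{gap0}{\rm (iv)}. The only cosmetic difference is that the paper cites \cite[Corollary 1.69]{aiena2} for the decomposition $H_0(T)=H_0(T_M)\oplus H_0(T_N)$, which you use without attribution.
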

\begin{proof} {\rm (i)} $\Longrightarrow$ {\rm (ii)}:
Suppose that $H_0(T)$ is closed and that there exists a closed
$T$-invariant subspace $M$ of $X$ such that $X=H_0(T) \oplus M$ and
$T(M)$ is closed. For $N=H_0(T)$  we have that  $(M,N)\in Red(T)$
and  $H_0(T_N)=N$, which implies that $T_N$ is quasinilpotent. From
$N(T_{M})=N(T)\cap M\subset H_0(T)\cap M=\{0\}$ it follows that
$T_{M}$ is injective and since $R(T_M)=T(M)$ is a closed  subspace
in $M$, we conclude that $T_M$ is bounded below.

{\rm (ii)} $\Longrightarrow$ {\rm (i)}: Assume  that there exists
$(M,N)\in Red(T)$ such that $ T_M$ is bounded below and $ T_N$ is
quasinilpotent. Then $(M,N)$ is a GKD for $T$, and so from
\cite[Corollary 1.69]{aiena2} it follows that $H_0(T)=
H_0(T_{M})\oplus H_0(T_{N})=H_0(T_{M})\oplus N$. Since $T_{M}$ is
bounded below,  we get that $H_0(T_{M})=\{0\}$ and hence $H_0(T)=N$.
Therefore, $H_0(T)$ is closed and complemented with $M$,
$(M,H_0(T))\in Red(T)$, and    $T(M)$ is closed because $T_{M}$ is
bounded below.

The implications {\rm (ii)} $\Longrightarrow$ {\rm (iii)} and {\rm
(vi)} $\Longrightarrow$ {\rm (vii)} can be proved analogously  to
the proof of the implication {\rm (i)} $\Longrightarrow$ {\rm (ii)}
in Theorem \ref{glavna}. The implications {\rm (iii)}
$\Longrightarrow$ {\rm (iv)} and {\rm (vii)} $\Longrightarrow$ {\rm
(viii)} are clear.

\smallskip

{\rm (viii)} $\Longrightarrow$ {\rm (ii)}:   Let $T$ admit a GKD and
let $0\notin\int\, \sigma_{{\bf \B}_+}(T)$, i.e. $0  \in \acc \,
\rho_{{\bf \B}_+}(T)$. There exists $(M,N) \in Red(T)$ such that
$T_M$ is Kato and $T_N$ is quasinilpotent.   From  $0  \in \acc \,
\rho_{{\bf \B}_+}(T)$ it follows that $0  \in \acc \, \rho_{{\bf
\B}_+}(T_M)$ according to Lemma ďż˝\ref{lema}{\rm (i)}.  From
Proposition ďż˝\ref{gap0}{\rm (iv)}  it follows that $T_M$ is
bounded below, and hence $T \in {\bf g D\M}(X)$.

{\rm (iv)} $\Longrightarrow$ {\rm (ii)}: This implication can be
proved by using Proposition \ref{gap0}(iii),  analogously to the
proof of the implication {\rm (viii)} $\Longrightarrow$ {\rm (ii)}.

{\rm (ii)} $\Longrightarrow$ {\rm (vi)}: Follows from the fact that
every bounded below operator is upper semi-Browder.

The equivalences {\rm (v)} $\Longleftrightarrow$ {\rm (ii)} and {\rm
(vi)} $\Longleftrightarrow$ {\rm (ix)} can be proved analogously to
the equivalence {\rm (i)} $\Longleftrightarrow$ {\rm (iv)}  in
Theorem \ref{glavna}.
\end{proof}

\begin{theorem} \label{t2}
For  $T\in L(X)$ the following conditions are equivalent:
\par \noindent {\rm (i)} $K(T)$ is closed
and there exists a closed subspace $N$ of $X$ such that $N \subset
H_0(T)$ and  $(K(T),N)\in Red(T)$;

\par \noindent {\rm
(ii)} There exists $(M,N)\in Red(T)$ such that $ T_M$ is surjective
and $ T_N$ is quasinilpotent, that is
 $T\in {\bf g D\mathcal{Q}}(X)$;


\par\noindent {\rm (iii)} $T$ admits a GKD and
$0\notin {\rm acc} \, \sigma_{su}(T)$;

\par\noindent {\rm (iv)} $T$ admits a GKD and
$0\notin{\rm int}\, \sigma_{su}(T)$;

\par \noindent {\rm (v)}
There exists a bounded projection $P$ on $X$ which commutes with $T$
such that $T+P$ is surjective and $TP$ is quasinilpotent;

\par \noindent {\rm
(vi)} There exists $(M,N)\in Red(T)$ such that $ T_M$ is lower
semi-Browder and $ T_N$ is quasinilpotent, that is $T\in {\bf g
D{\mathcal{B}_-}}(X)$;

\par \noindent {\rm (vii)} $T$ admits a GKD and
$0\notin {\rm acc} \, \sigma_{\mathcal{B}_-}(T)$;

\par \noindent {\rm (viii)}
$T$ admits a GKD and $0\notin{\rm int}\, \sigma_{\B_-}(T)$;

\par \noindent {\rm (ix)}
There exists a bounded projection $P$ on $X$ which commutes with $T$
such that $T+P$ is lower semi-Browder and $TP$ is quasinilpotent.

\par In particular, if $T$ satisfies any of the conditions {\rm (i)}-{\rm (ix)},
 then the subspace $M$ in {\rm (ii)} is uniquely determined and $M=K(T)$.
\end{theorem}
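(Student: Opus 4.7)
The plan is to mirror the proof of Theorem~\ref{t1} by systematically dualizing: bounded below becomes surjective, upper semi-Browder becomes lower semi-Browder, $H_0(T)$ becomes $K(T)$, and the spectra $\sigma_{ap}$, $\sigma_{\B_+}$ become $\sigma_{su}$, $\sigma_{\B_-}$. Concretely I would verify the cyclic chain
\[
\text{(i)}\Leftrightarrow\text{(ii)}\Rightarrow\text{(iii)}\Rightarrow\text{(iv)}\Rightarrow\text{(ii)}\Rightarrow\text{(vi)}\Rightarrow\text{(vii)}\Rightarrow\text{(viii)}\Rightarrow\text{(ii)}
\]
and separately add the projection reformulations (v)$\Leftrightarrow$(ii) and (vi)$\Leftrightarrow$(ix) following Theorem~\ref{glavna}(iv).

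For (i)$\Rightarrow$(ii) I would set $M=K(T)$. Then $T_N$ is quasinilpotent, since $H_0(T_N)=H_0(T)\cap N\supseteq N$, and $T_M$ is surjective, since $T(K(T))=K(T)$ is a classical property of the analytical core. For the converse, given $(M,N)\in Red(T)$ with $T_M$ surjective and $T_N$ quasinilpotent, I would invoke the direct-sum identities $K(T)=K(T_M)\oplus K(T_N)$ and $H_0(T)=H_0(T_M)\oplus H_0(T_N)$ (the latter being \cite[Corollary 1.69]{aiena2}), together with $K(T_M)=M$ (surjective operators have full analytical core, \cite[Theorem 1.22]{aiena2}) and $K(T_N)=\{0\}$ for quasinilpotent $T_N$. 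The last fact follows from $x=T_N^n x_n$ forcing $\|x\|\leq\|T_N^n\|\,c^n\|x\|$ for every $n$, which contradicts $\|T_N^n\|^{1/n}\to 0$ unless $x=0$. Hence $K(T)=M$ is closed and $N\subseteq H_0(T_N)\subseteq H_0(T)$.

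The implications (ii)$\Rightarrow$(iii) and (vi)$\Rightarrow$(vii) follow from Proposition~\ref{open} combined with the absorption argument of Theorem~\ref{glavna}(i)$\Rightarrow$(ii): splitting $T_M$ via \cite[Theorem 16.21]{Mu} into a Kato part plus a finite-dimensional nilpotent part, and merging the latter with the quasinilpotent summand, produces a GKD. The implications (iii)$\Rightarrow$(iv) and (vii)$\Rightarrow$(viii) are immediate, since interior points of any subset of $\CC$ are accumulation points. For (iv)$\Rightarrow$(ii) (respectively (viii)$\Rightarrow$(ii)), given a GKD $(M_0,N_0)$ and $0\in\acc\,\rho_{su}(T)$ (resp.\ $0\in\acc\,\rho_{\B_-}(T)$), I would transfer the clustering to the Kato summand via Lemma~\ref{lema}(i) and apply Proposition~\ref{gap0}(iii) (resp.\ Proposition~\ref{gap0}(iv)) to deduce that $T_{M_0}$ is surjective. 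The implication (ii)$\Rightarrow$(vi) is trivial: a surjective operator has $\beta=0$ and descent $0$, hence is lower semi-Browder. Finally, (v)$\Leftrightarrow$(ii) and (vi)$\Leftrightarrow$(ix) are obtained verbatim from the equivalence (i)$\Leftrightarrow$(iv) of Theorem~\ref{glavna}, using the projection $P$ with $R(P)=N$ and $N(P)=M$.

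The main technical obstacle I anticipate is the clean bookkeeping in (i)$\Leftrightarrow$(ii), namely the decomposition formula $K(T)=K(T_M)\oplus K(T_N)$ for $(M,N)\in Red(T)$ and the vanishing $K(Q)=\{0\}$ for quasinilpotent $Q$. Once these are in place, the uniqueness assertion $M=K(T)$ at the end of the theorem is a free corollary: any admissible decomposition forces $K(T)=K(T_M)\oplus K(T_N)=M\oplus\{0\}=M$.
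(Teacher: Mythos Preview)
Your proposal is correct and follows essentially the same route as the paper. The only minor deviation is in (ii)$\Rightarrow$(i): the paper observes that $(M,N)$ is already a GKD (surjective $\Rightarrow$ Kato) and then quotes \cite[Theorem~1.41]{aiena2} to get $K(T)=K(T_M)=M$ in one stroke, whereas you reach the same conclusion via the direct-sum identity $K(T)=K(T_M)\oplus K(T_N)$ together with your explicit computation $K(T_N)=\{0\}$ for quasinilpotent $T_N$; both arguments are valid and equally short.
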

\begin{proof} {\rm (i)} $\Longrightarrow$ {\rm (ii)}:
Assume  that $K(T)$ is closed and that there exists a closed
$T$-invariant subspace $N$, such that  $N\subset H_0(T)$ and $X=K(T)
\oplus N$. For $M=K(T)$  we have that  $(M,N)\in Red(T)$,
$R(T_M)=R(T)\cap M=R(T)\cap K(T)=K(T)=M$, and so $T_M$ is
surjective. Since   $H_0(T_N)=H_0(T)\cap N=N$, we conclude that
$T_N$ is quasinilpotent.

{\rm (ii)} $\Longrightarrow$ {\rm (i)}: Suppose that there exists
$(M,N)\in Red(T)$ such that $ T_M$ is surjective and $ T_N$ is
quasinilpotent.  Then $(M,N)$ is a GKD for $T$ and from
\cite[Theorem 1.41]{aiena2} we obtain that $K(T)=K(T_M)$. Since
$T_M$ is surjective, it follows that $K(T_M)=M$, and so $K(T)=M$ and
 $K(T)$ is closed.  Thus $(K(T),N)\in Red(T)$ and since $ T_N$ is quasinilpotent, we have that $N=H_0(T_N)\subset H_0(T)$.

The rest of the proof is similar to the proofs of Theorems \ref{t1}
and \ref{glavna}.
\end{proof}

\noindent The equivalences {\rm (i)}$\Longleftrightarrow${\rm (ii)}
of Theorem ďż˝\ref{t1} and Theorem ďż˝\ref{t2} were proved
recently, see Proposition 3.2 and Proposition 3.4 in \cite{algeria},
but we emphasize that the authors permanently use term "left
invertible" instead of "bounded below" and "right invertible"
instead of "surjective". Also, the equivalences {\rm
(i)}$\Longleftrightarrow${\rm (v)} of the theorems mentioned above
are established in \cite{milos}.

\begin{remark} {\em If $T$ is generalized Drazin invertible, then from Theorem \ref{Kol-Dra} and Theorem \ref{t1} it follows that $T$
is generalized Drazin bounded below and in that case  the closed
subspace $M$ of $X$ which satisfies the condition (i) in Theorem
\ref{t1}, i.e. such that $(M,H_0(T))\in Red(T)$ and $T(M)$ is
closed, is uniquely determined-we show that it must be equal to
$K(T)$. In other words, the projection $P$ which satisfies the
condition (v) in Theorem \ref{t1} is uniquely determined-it is equal
to the spectral idempotent of $T$ corresponding to the set $\{0\}$.

Indeed, from Theorem \ref{t1} it follows that $T=T_M\oplus
T_{H_0(T)}$, $T_M$ is bounded below and $T_{H_0(T)}$ is
quasinilpotent. Since $T$ is generalized Drazin invertible, we have
that $0\notin\acc\, \sigma(T)$,  and hence, $0\notin\acc\,
\sigma(T_M)$. $T_M$ is Kato since it is bounded below, and so by
Proposition \ref{gap0}{\rm (i)} we obtain that $T_M$ is invertible.
Since $T$ admits a GKD $(M,H_0(T))$, from \cite[Theorem
3.15]{aiena2} it follows that $M=K(T)$. The similar observation can
be stated in the context of Theorem ˜\ref{t2}.}
\end{remark}

In the following theorem we give several necessary and sufficient
conditions for $T\in L(X)$ to be generalized Drazin invertible.

\begin{theorem}\label{t3}
Let $T\in L(X)$. The following conditions are equivalent:

\snoi {\rm (i)} $T$ is generalized Drazin invertible;

\snoi {\rm (ii)} $T$ admits a GKD and $0\notin{\rm int}\,
\sigma(T)$;

\snoi {\rm (iii)} $T$ admits a GKD and $0\notin{\rm int}\,
\sigma_{\B}(T)$;

\snoi {\rm (iv)} $T$ admits a GKD and $0\notin {\rm acc} \,
\sigma_{\mathcal{B}}(T)$;

\snoi {\rm (v)} There exists $(M,N)\in Red(T)$ such that $ T_M$ is
Browder and $ T_N$ is quasinilpotent;

\snoi {\rm (vi)} There exists a bounded projection $P$ on $X$ which
commutes with $T$ such that $T+P$ is Browder and $TP$ is
quasinilpotent.
\end{theorem}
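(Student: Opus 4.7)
The strategy is to reduce the theorem to the already-established Theorem \ref{Kol-Dra}, Theorem \ref{t1}, and Theorem \ref{t2}, exploiting three facts: (a) Browder $=$ upper semi-Browder $\cap$ lower semi-Browder, so $\sigma_{\mathcal{B}}(T)=\sigma_{\mathcal{B}_+}(T)\cup\sigma_{\mathcal{B}_-}(T)$ and $\rho_{\mathcal{B}}(T)=\rho_{\mathcal{B}_+}(T)\cap\rho_{\mathcal{B}_-}(T)$; (b) a Browder operator $T_M$ splits as an invertible operator plus a finite-dimensional nilpotent operator via $M=R(T_M^p)\oplus N(T_M^p)$, where $p=\asc(T_M)=\dsc(T_M)<\infty$; and (c) a direct sum of a nilpotent and a quasinilpotent operator on complementary $T$-invariant subspaces is again quasinilpotent, since the spectrum of a direct sum is the union of the spectra. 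I would prove the cycle (i) $\Rightarrow$ (ii) $\Rightarrow$ (iii) $\Rightarrow$ (v) $\Rightarrow$ (i), with (iv) and (vi) handled separately.

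For (i) $\Rightarrow$ (ii): Theorem \ref{Kol-Dra}((i)$\Leftrightarrow$(v)) supplies $(M,N)\in Red(T)$ with $T_M$ invertible (a fortiori Kato) and $T_N$ quasinilpotent, so $(M,N)$ is a GKD, while Theorem \ref{Kol-Dra}((i)$\Leftrightarrow$(iii)) gives $0\notin\acc\,\sigma(T)$, which forces $0\notin\int\,\sigma(T)$ since in any metric space an interior point of a set is an accumulation point of that set. The implication (ii) $\Rightarrow$ (iii) is the set-theoretic observation $\sigma_{\mathcal{B}}(T)\subset\sigma(T)$, hence $\int\,\sigma_{\mathcal{B}}(T)\subset\int\,\sigma(T)$. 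For (v) $\Rightarrow$ (i), given $(M,N)\in Red(T)$ with $T_M$ Browder and $T_N$ quasinilpotent, I would use (b) to write $T_M=S\oplus Q_0$ with $S$ invertible on $R(T_M^p)$ and $Q_0$ nilpotent on the finite-dimensional $N(T_M^p)$; absorbing $Q_0$ into $T_N$, by (c) we obtain $T=S\oplus(Q_0\oplus T_N)$ with $S$ invertible and $Q_0\oplus T_N$ quasinilpotent, so (i) follows from Theorem \ref{Kol-Dra}((v)$\Rightarrow$(i)).

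The main step is (iii) $\Rightarrow$ (v), which I expect to be the principal obstacle because $\sigma_{\mathcal{B}}$ is a \emph{union} of the two one-sided Browder spectra, so $0\notin\int\,\sigma_{\mathcal{B}}(T)$ does not directly yield $0\notin\int\,\sigma_{\mathcal{B}_\pm}(T)$. My plan: if $0\notin\sigma_{\mathcal{B}}(T)$ then $T$ itself is Browder and (v) holds with $N=\{0\}$; otherwise $0\in\partial\sigma_{\mathcal{B}}(T)$, hence $0\in\acc\,\rho_{\mathcal{B}}(T)$, and by (a) this reads
$$0\in\acc\,\rho_{\mathcal{B}_+}(T)\cap\acc\,\rho_{\mathcal{B}_-}(T).$$
Fix a GKD $(M',N')$ with $T_{M'}$ Kato and $T_{N'}$ quasinilpotent; since $T_{N'}-\lambda\in L(N')^{-1}$ for every $\lambda\ne 0$, Lemma \ref{lema}(i) transfers both accumulation properties to $T_{M'}$, yielding $0\in\acc\,\rho_{\mathcal{B}_+}(T_{M'})\cap\acc\,\rho_{\mathcal{B}_-}(T_{M'})$. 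Proposition \ref{gap0}(iv) applied to the Kato operator $T_{M'}$ in both one-sided flavors then shows that $T_{M'}$ is simultaneously bounded below and surjective, hence invertible. Thus $T=T_{M'}\oplus T_{N'}$ is an invertible-plus-quasinilpotent splitting and in particular an instance of (v). The identical argument, with $0\in\acc\,\rho_{\mathcal{B}}(T)$ obtained directly from the closedness of $\sigma_{\mathcal{B}}(T)$, proves (iv) $\Rightarrow$ (v); and (i) $\Rightarrow$ (iv) is immediate from Theorem \ref{Kol-Dra}((i)$\Leftrightarrow$(iii)) together with $\sigma_{\mathcal{B}}(T)\subset\sigma(T)$.

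The equivalence (v) $\Leftrightarrow$ (vi) is handled by the spectral idempotent argument used for (i) $\Leftrightarrow$ (iv) in Theorem \ref{glavna}: given (v) with decomposition $(M,N)$, let $P\in L(X)$ be the projection with $N(P)=M$ and $R(P)=N$; then $TP=PT$, $(T+P)_M=T_M\in\mathcal{B}(M)$, $(T+P)_N=T_N+I_N$ is invertible hence lies in $\mathcal{B}(N)$, and Lemma \ref{lema}(i) gives $T+P\in\mathcal{B}(X)$, while $(TP)_M=0$ and $(TP)_N=T_N$ shows $TP$ is quasinilpotent. Conversely, given (vi) one sets $M=N(P)$, $N=R(P)$ and recovers $T_M$ Browder via Lemma \ref{lema}(i) applied to $(T+P)_M=T_M$, and $T_N$ quasinilpotent from $TP$ quasinilpotent exactly as in (iv) $\Rightarrow$ (i) of Theorem \ref{glavna}.
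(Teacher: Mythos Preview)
Your proposal is correct and aligns with the paper's approach: the paper's proof of this theorem is the single sentence ``Similar to the proof of Theorem \ref{t1},'' and your argument is precisely the natural unpacking of that reference---the crucial step (iii) $\Rightarrow$ (v) via a GKD $(M',N')$, transfer through Lemma \ref{lema}(i), and a two-sided application of Proposition \ref{gap0}(iv) to conclude $T_{M'}$ is invertible, together with the projection argument for (v) $\Leftrightarrow$ (vi), are exactly what the pattern of Theorem \ref{t1} dictates. The only cosmetic difference is that for (v) $\Rightarrow$ (i) you invoke the explicit Browder splitting $M=R(T_M^p)\oplus N(T_M^p)$ and absorb the finite-dimensional nilpotent piece into $N$, whereas the paper's template would instead pass through (v) $\Rightarrow$ (iv) via Proposition \ref{open} and the essentially-Kato decomposition of $T_M$ (as in the proof of (i) $\Rightarrow$ (ii) in Theorem \ref{glavna}); both routes are equally short and use the same underlying fact.
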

\begin{proof} Similar to the proof of Theorem \ref{t1}.
\end{proof}


Analysis similar to that in the proof of Theorem ˜\ref{glavna} gives
the following result.

\begin{theorem}\label{B-Fredholm}
Let $T \in L(X)$ and $1 \leq i \leq 12$. The following conditions
are equivalent:\par \noindent {\rm (i)} There exists $(M,N)\in
Red(T)$ such that $ T_M\in {\bf R}_i$  and $ T_N$ is nilpotent, that
is $T \in {\bf DR}_i$;\par \noindent {\rm (ii)} $T$ is of Kato type
and $0 \not \in \acc \, \sigma_{{\bf R}_i}(T)$;
\par\noindent {\rm (iii)} $T$ is of Kato type and
$0  \notin \int \, \sigma_{{\bf R}_i}(T)$;

\par \noindent {\rm
(iv)} There exists a projection $P \in L(X)$ that commutes with $T$
such that $T+P \in {\bf R}_i$ and $TP$ is nilpotent.
\end{theorem}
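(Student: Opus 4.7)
The plan is to follow the proof of Theorem~\ref{glavna} almost verbatim, replacing ``quasinilpotent'' by ``nilpotent'' throughout, and extending the index range from $1\le i\le 6$ to the full range $1\le i\le 12$.

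For (i) $\Rightarrow$ (ii), write $T=T_M\oplus T_N$ with $T_M\in{\bf R}_i$ and $T_N$ nilpotent. Since nilpotent operators are quasinilpotent, $T\in{\bf gDR}_i$, so Proposition~\ref{open} gives $0\notin\acc\,\sigma_{{\bf R}_i}(T)$. To produce a Kato type decomposition, recall that for $1\le i\le 9$ the class ${\bf R}_i$ consists of essentially Kato operators by \cite[Theorem~16.21]{Mu}, so $M=M_1\oplus M_2$ with $T_{M_1}$ Kato, $T_{M_2}$ nilpotent and $M_2$ finite-dimensional. Then $(M_1,M_2\oplus N)$ is a Kato type decomposition of $T$, because $T_{M_2}\oplus T_N$ is the direct sum of two nilpotent operators and is therefore nilpotent. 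For $10\le i\le 12$ the operator $T_M$ is itself Kato, so $(M,N)$ already serves. The implication (ii) $\Rightarrow$ (iii) is trivial.

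For (iii) $\Rightarrow$ (i), take a Kato type decomposition $T=T_{M_0}\oplus T_{N_0}$ and use the hypothesis $0\in\acc\,\rho_{{\bf R}_i}(T)$. Since $T_{N_0}$ is nilpotent, $T_{N_0}-\lambda$ is invertible for every small $\lambda\ne 0$, hence lies in every class ${\bf R}_j$. Lemma~\ref{lema}(i) (for $i\in\{1,2,3,7,8,9,10,11,12\}$) or Lemma~\ref{lema}(iii) together with the Weyl property of $T_{N_0}-\lambda$ (for $i\in\{4,5,6\}$) transfers membership in ${\bf R}_i$ from $T-\lambda$ to $T_{M_0}-\lambda$, yielding $0\in\acc\,\rho_{{\bf R}_i}(T_{M_0})$. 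Since $T_{M_0}$ is Kato, the appropriate part of Proposition~\ref{gap0} places $T_{M_0}$ in ${\bf R}_i$. For the Browder indices $i\in\{7,8,9\}$, Proposition~\ref{gap0}(iv) in fact furnishes the stronger property that $T_{M_0}$ is bounded below or surjective, which still lies in ${\bf R}_i$; similarly for $i=12$ one combines the two branches of Proposition~\ref{gap0}(iii) to obtain invertibility.

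The equivalence (i) $\Leftrightarrow$ (iv) is transcribed from Theorem~\ref{glavna} with ``quasinilpotent'' replaced by ``nilpotent''. From (i), let $P$ be the projection onto $N$ along $M$; commutativity gives $(TP)^n=T_N^n\oplus 0$, which vanishes for large $n$ because $T_N$ is nilpotent, and $T+P=T_M\oplus(T_N+I_N)$ with $T_N+I_N$ invertible lies in ${\bf R}_i$ by Lemma~\ref{lema}(i)--(ii). Conversely, from (iv) set $M=N(P)$, $N=R(P)$; then $T_N^n=(TP)^n=0$ for large $n$ by commutativity, so $T_N$ is nilpotent, and Lemma~\ref{lema} extracts $T_M\in{\bf R}_i$ from $T+P=T_M\oplus(T_N+I_N)$. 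The only obstacle beyond a direct rerun of Theorem~\ref{glavna} is the case-by-case bookkeeping of which part of Lemma~\ref{lema} and of Proposition~\ref{gap0} applies for each of the twelve classes; this requires some care for the Browder indices $i\in\{7,8,9\}$, where Proposition~\ref{gap0}(iv) over-delivers by producing a conclusion (bounded below or surjective) strictly stronger than the Browder property sought.
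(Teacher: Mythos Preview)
Your proposal is correct and is exactly what the paper intends: its entire proof is the single sentence ``Analysis similar to that in the proof of Theorem~\ref{glavna} gives the following result,'' and you have carried out precisely that analysis, correctly handling the additional indices $7\le i\le 12$ by invoking Proposition~\ref{gap0}(iii),(iv) (as is done in Theorems~\ref{t1}, \ref{t2}, \ref{t3} for the GKD case) and observing that the direct sum of two nilpotent operators is nilpotent.
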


\noindent Using \cite[Theorem 2.7]{Ber} and \cite[Lemma 4.1]{Ber2}
we see that if $i=3$ $(i=6)$ then the conditions {\rm (i)}-{\rm
(iv)} of Theorem ˜\ref{B-Fredholm} are equivalent to the fact that
$T$ is B-Fredholm ($T$ is B-Weyl), while if $i=12$ these conditions
are equivalent to the fact that $T$ is Drazin invertible. Similar to
the definitions of the B-Fredholm and B-Weyl operators, the classes
${\bf BR}_i$ are introduced and studied \cite{Ber0}. If $i \in
\{1,2,4,5,7,8,10,11\}$ then it is possible to show that the
conditions {\rm (i)}-{\rm (iv)} of Theorem ˜\ref{B-Fredholm} are
equivalent to the fact that $T$ is of Kato type and $T$ belongs to
the class ${\bf BR}_i$. For the case of a Hilbert space see
\cite[Theorem 3.12]{Ber0}.

\begin{corollary} \label{t4-GKt} Let $T\in L(X)$ and let $0\in\partial\sigma_{{\bf R}_i}(T), \, 1 \leq i \leq
12$. Then the following assertions hold.\par

\noindent {\rm (i)} $T$ admits a generalized Kato decomposition if
and only if $T$ belongs to the class ${\bf gDR}_i$. \par

\noindent {\rm (ii)} $T$ is of Kato type if and only if $T$ belongs
to the class ${\bf DR}_i$.
\end{corollary}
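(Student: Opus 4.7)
The plan is to read off both parts of the corollary directly from the equivalences established earlier, exploiting the elementary fact that a boundary point of a subset of $\CC$ is never an interior point of that subset. Thus the hypothesis $0\in\partial\sigma_{{\bf R}_i}(T)$ gives us, for free, the condition $0\notin\int\, \sigma_{{\bf R}_i}(T)$, which is exactly the side hypothesis appearing in Theorems \ref{glavna}, \ref{t1}, \ref{t2}, \ref{t3} and \ref{B-Fredholm}.

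For part (i), I would split into cases according to the index $i$, but only to verify coverage rather than to do any new work. For $1\le i\le 6$, the equivalence ``$T\in{\bf gDR}_i$ iff $T$ admits a GKD and $0\notin\int\,\sigma_{{\bf R}_i}(T)$'' is the (i)$\Leftrightarrow$(iii) part of Theorem \ref{glavna}. For $i\in\{7,10\}$ (upper semi-Browder and bounded below) this is the (ii)$\Leftrightarrow$(iv) and (ii)$\Leftrightarrow$(viii) part of Theorem \ref{t1}; for $i\in\{8,11\}$, the analogous statement from Theorem \ref{t2}; for $i=9$, Theorem \ref{t3}; for $i=12$, the classical Theorem \ref{Kol-Dra} combined with Theorem \ref{t3}. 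In each case the forward implication ``$T$ admits a GKD $\Rightarrow T\in{\bf gDR}_i$'' uses precisely the interior condition we have just extracted from the boundary hypothesis, and the reverse implication ``$T\in{\bf gDR}_i \Rightarrow T$ admits a GKD'' is an unconditional consequence of the same equivalence.

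For part (ii), the argument is identical, with Theorem \ref{B-Fredholm} (the (i)$\Leftrightarrow$(iii) equivalence) taking the place of Theorems \ref{glavna}--\ref{t3}: being of Kato type together with $0\notin\int\,\sigma_{{\bf R}_i}(T)$ is equivalent to $T\in{\bf DR}_i$, and the boundary hypothesis supplies the interior condition automatically. There is no real obstacle here; the corollary is essentially a bookkeeping step that packages the main theorems of Section~3 into a form that applies uniformly whenever $0$ is a boundary point of the chosen spectrum.
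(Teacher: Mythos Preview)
Your proposal is correct and follows essentially the same approach as the paper: the authors likewise observe that $0\in\partial\sigma_{{\bf R}_i}(T)$ forces $0\notin\int\,\sigma_{{\bf R}_i}(T)$ and then simply cite the relevant (i)$\Leftrightarrow$(iii)-type equivalences from Theorems \ref{glavna}, \ref{t1}, \ref{t2}, \ref{t3} and \ref{B-Fredholm}. Your case-by-case bookkeeping is slightly more explicit than the paper's terse one-line citation, and for $i=12$ you need only Theorem \ref{t3} (i)$\Leftrightarrow$(ii) (Theorem \ref{Kol-Dra} is not required), but these are cosmetic differences.
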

\begin{proof}  Follows from the equivalence {\rm (i)}$\Longleftrightarrow${\rm (iii)} in Theorem \ref{glavna}, the equivalences
{\rm (ii)}$\Longleftrightarrow${\rm (iv)} in Theorems \ref{t1} and
\ref{t2}, the equivalence {\rm (i)}$\Longleftrightarrow${\rm (ii)}
in Theorem \ref{t3} and from the equivalence {\rm
(i)}$\Longleftrightarrow${\rm (iii)} in Theorem ˜\ref{B-Fredholm}.
\end{proof}

\noindent The case $i=12$ is proved in \cite[Theorem
2.9]{aienarosas} and \cite[Theorem 3.8]{kinezi}.

\begin{remark}\label{poi} {\em We remark that
\begin{eqnarray*}
   &\Phi_+(X)\setminus\W_+(X)\subset {\bf gD\Phi_+}(X)\setminus {\bf gD\W_+}(X),\label{gh0}\\
   &\Phi_-(X)\setminus\W_-(X)\subset {\bf gD\Phi_-}(X)\setminus {\bf gD\W_-}(X),\label{gh}\\
   &\Phi(X)\setminus\W(X)\subset {\bf gD\Phi}(X)\setminus {\bf gD\W}(X).\label{gh00}
\end{eqnarray*}
Indeed, as the index is locally constant, the set
$\Phi_+(X)\setminus\W_+(X)=\{T\in \Phi(X): \ind(T)>0\}$ is open,
which implies that the set
$\sigma_{\W_+}(T)\setminus\sigma_{\Phi_+}(T)=\rho_{\Phi_+}(T)\setminus\rho_{\W_+}(T)$
is open for every $T\in L(X)$. Suppose that $T\in
\Phi_+(X)\setminus\W_+(X)$. Then $T\in {\bf gD \Phi_+}(X)$ and $0\in
\sigma_{\W_+}(T)\setminus\sigma_{\Phi_+}(T)$. There exists
$\epsilon>0$ such that $K(0,\epsilon)\subset
\sigma_{\W_+}(T)\setminus\sigma_{\Phi_+}(T)$. Hence, $0\in \acc\,
\sigma_{\W_+}(T)$ and $T\notin {\bf gD\W_+}(X)$  according to
Theorem \ref{glavna}. Similarly for the remaining inclusions.}
\end{remark}
\noindent The next example shows that the inclusions ${\bf
gD\W_+}(X)\subset {\bf gD\Phi_+}(X)$, ${\bf gD\W_-}(X)  \\ \subset
{\bf gD\Phi_-}(X)$  and ${\bf gD\W}(X)\subset {\bf gD\Phi}(X)$ can
be proper.
\begin{example} \label{ex2}
{\em Let $U$ and $V$ be the forward and the backward unilateral
shifts defined on $\ell_p(\NN), \, (1\leq p < \infty)$,
respectively. The operators $U$ and $V$ are Fredholm, $\ind(U)=-1$
and $\ind(V)=1$. Therefore, $U\in \Phi_-(X)\setminus\W_-(X)$ and
$V\in \Phi_+(X)\setminus\W_+(X)$, and also $U,V\in
\Phi(X)\setminus\W(X)$. Hence, according to Remark \ref{poi}, $U\in
{\bf gD\Phi_-}(X)\setminus {\bf gD\W_-}(X)$, $V\in {\bf
gD\Phi_+}(X)\setminus {\bf gD\W_+}(X)$ and $U,V\in {\bf
gD\Phi}(X)\setminus {\bf gD\W}(X)$.}
\end{example}
\noindent We also show that the inclusions ${\bf gD\M}(X)\subset
{\bf gD\W_+}(X)$ and ${\bf gD\Q}(X)\subset {\bf gD\W_-}(X)$ can be
proper.
\begin{example} \label{ex3}
{\em Let $U$ and $V$ be as in Example ˜\ref{ex2} and let $T=U\oplus
V$. Then, according to Lemma \ref{lema}(i), $T$ is Fredholm and
$\ind(T)=\ind(U)+\ind(V)=0$. Thus $T$ is Weyl and hence, $T$ is
generalized Drazin Weyl. Since $\sigma_{ap}(U)=
\sigma_{su}(V)=\partial\DD$ and $\sigma_{su}(U)=
\sigma_{ap}(V)=\DD$, where $\mathbb{D}=\{\lambda \in
\mathbb{C}:|\lambda|\leq1\}$, it follows that
$\sigma_{ap}(T)=\sigma_{ap}(U)\cup \sigma_{ap}(V)=\DD$ and
$\sigma_{su}(T)=\sigma_{su}(U)\cup \sigma_{su}(V)=\DD$. Therefore,
$0\in\acc\, \sigma_{ap}(T)$ and $0\in\acc\, \sigma_{su}(T)$ and from
Theorems \ref{t1} and \ref{t2} it follows that $T$ is neither
generalized Drazin bounded below nor generalized Drazin surjective.}
\end{example}

\section{Applications}

For $T\in L(X)$ we define the spectra with respect to the sets ${\bf
g D R}_i, \; 1\leq i \leq 12$, in a classical way,

\[\sigma_{ {\bf g D R}_i}(T)=\{\lambda \in \mathbb{C}:T-\lambda \not \in
{\bf g D R}_i\}, \; \; 1\leq i \leq 12.\] \noindent From Theorems
\ref{glavna}, \ref{t1} and \ref{t2} it follows that
\begin{eqnarray}
 \sigma_{{ \bf g D R}_i}(T)&=&\sigma_{gK}(T) \cup \acc \, \sigma_{{\bf
R}_i}(T)\nonumber\\&=&\sigma_{gK}(T) \cup \int\, \sigma_{{\bf
R}_i}(T), \ 1\leq i \leq 12.\label{glava-}
\end{eqnarray}

\noindent From Theorem ˜\ref{B-Fredholm} we conclude
\begin{equation} \label{inkluzijeBF}
\sigma_{B\Phi}(T)=\sigma_{Kt}(T) \cup \int \, \sigma_{\Phi}(T) \; \;
\text{and} \; \; \sigma_{B\W}(T)=\sigma_{Kt}(T) \cup \int \,
\sigma_{\W}(T).
\end{equation}

\noindent From \eqref{glava-} it follows that if $T\in L(X)$ has the
property that
\begin{equation*}
\sigma_{{\bf R}_i}(T)=\partial\sigma_{{\bf R}_i}(T),
\end{equation*}
then
\[
  \sigma_{gK}(T)=\sigma_{{\bf gDR}_i}(T),\ 1 \leq i \leq 12.
\]
Consequently,  if $\sigma(T)$ is at most countable or  contained in
a line, then $\sigma_{gK}(T)=\sigma_{gD}(T)$. Every  self-adjoint,
as well as, unitary operator on a Hilbert space have the spectrum
contained in a line.  The spectrum of polynomially meromorphic
operator \cite{KaashoekPRIA} is at most countable.

\begin{proposition} \label{cor1}
Let $T \in L(X)$ and $1\leq i \leq 12$. The following statements
hold:
\par \noindent {\rm (i)} $\sigma_{{\bf g D R}_i}(T) \subset
\sigma_{{\bf R}_i}(T) \subset \sigma(T)$;\par \noindent {\rm (ii)}
$\sigma_{{\bf g D R}_i}(T)$ is a compact subset of $\mathbb{C}$;\par
\noindent {\rm (iii)} $\sigma_{{\bf R}_i}(T) \setminus \sigma_{{\bf
g D R}_i}(T)$ consists  of at most countably many isolated points.
\end{proposition}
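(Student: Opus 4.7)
The plan is to deduce all three items from the characterization \eqref{glava-} and Proposition~\ref{open}, without introducing any new decomposition arguments.

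For part~(i), the inclusion $\sigma_{{\bf R}_i}(T)\subset\sigma(T)$ is immediate from $L(X)^{-1}\subset{\bf R}_i$, which is recorded in the preliminaries. The inclusion $\sigma_{{\bf gDR}_i}(T)\subset\sigma_{{\bf R}_i}(T)$ reduces to observing that ${\bf R}_i\subset{\bf gDR}_i$: every $T\in{\bf R}_i$ admits the degenerate decomposition with $N=\{0\}$, the zero operator on the zero space being quasinilpotent.

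For part~(ii), boundedness is free from~(i). To obtain closedness I would show the complement is open. Suppose $\lambda_0\notin\sigma_{{\bf gDR}_i}(T)$, i.e.\ $T-\lambda_0\in{\bf gDR}_i$. Proposition~\ref{open} applied to $T-\lambda_0$ gives $0\notin\acc\,\sigma_{{\bf R}_i}(T-\lambda_0)$; the translation identity $\sigma_{{\bf R}_i}(T-\lambda_0)=\sigma_{{\bf R}_i}(T)-\lambda_0$ then rewrites this as $\lambda_0\notin\acc\,\sigma_{{\bf R}_i}(T)$. Hence there is $\varepsilon>0$ with $T-\lambda\in{\bf R}_i\subset{\bf gDR}_i$ for every $\lambda$ with $0<|\lambda-\lambda_0|<\varepsilon$, and combined with $\lambda_0\notin\sigma_{{\bf gDR}_i}(T)$ this produces an open neighborhood of $\lambda_0$ inside the complement. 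So $\sigma_{{\bf gDR}_i}(T)$ is compact.

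For part~(iii), the same computation as in~(ii) shows that every $\lambda_0\in\sigma_{{\bf R}_i}(T)\setminus\sigma_{{\bf gDR}_i}(T)$ is isolated in $\sigma_{{\bf R}_i}(T)$, hence \emph{a fortiori} isolated in the smaller set itself. The conclusion follows from the standard second-countability argument that any set of isolated points in $\CC$ is at most countable: fix a countable base of $\CC$, assign to each isolated $\lambda$ a basic open set meeting $\sigma_{{\bf R}_i}(T)$ only at $\lambda$, and observe that the assignment is injective. The whole argument is essentially bookkeeping; the only mildly delicate step is the translation that transfers Proposition~\ref{open} from the origin to an arbitrary $\lambda_0$, but this is routine.
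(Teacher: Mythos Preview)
Your argument is correct and essentially identical to the paper's: both parts~(ii) and~(iii) are proved by applying Proposition~\ref{open} to $T-\lambda_0$ to see that a punctured neighborhood of $\lambda_0$ lies in $\rho_{{\bf R}_i}(T)$, giving openness of the complement and isolation of the residual points. The only differences are cosmetic---you spell out the translation $\sigma_{{\bf R}_i}(T-\lambda_0)=\sigma_{{\bf R}_i}(T)-\lambda_0$ and the countability of isolated points, which the paper leaves implicit---and the mention of~\eqref{glava-} in your plan is superfluous since you never invoke it.
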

\begin{proof}
{\rm (i):} It is obvious.\par \noindent {\rm (ii):} It suffices to
prove that $\sigma_{{\bf g D R}_i}(T)$ is closed since it is bounded
by the part {\rm (i)}. If $\lambda_0 \not \in \sigma_{{\bf g D
R}_i}(T)$, then $T-\lambda_0 \in {\bf g D R}_i$ and by Proposition
ďż˝\ref{open} there exists $\epsilon>0$ such that
$T-\lambda_0-\lambda \in {\bf R}_i\subset {\bf g D R}_i$ for
$0<|\lambda|<\epsilon$. It means that open disc of radius $\epsilon$
centered at $\lambda_0$ is contained in the complement of
$\sigma_{{\bf g D R}_i}(T)$, so $\sigma_{{\bf g D R}_i}(T)$ is
closed.
\par \noindent {\rm (iii):} If $\lambda\in \sigma_{{\bf R}_i}(T) \setminus \sigma_{{\bf
g D R}_i}(T)$, then $\lambda \in \sigma_{{\bf R}_i}(T)$ and
$T-\lambda\in {\bf g D R}_i$. Applying Proposition ďż˝\ref{open}
we obtain that  $\lambda \in  \iso \, \sigma_{{\bf R}_i}(T)$, and
hence $\sigma_{{\bf R}_i}(T) \setminus \sigma_{{\bf g D R}_i}(T)$
consists  of at most countably many isolated points.
\end{proof}

\begin{corollary} \label{cor5}
Let $T \in L(X)$. Then the following inclusions hold:
\begin{eqnarray*}
  \acc \, \sigma_{ap}(T) \setminus \acc \, \sigma_{\mathcal{B}_+}(T) &\subset& \sigma_{gK}(T), \\
  \acc \, \sigma_{su}(T) \setminus \acc \, \sigma_{\mathcal{B}_-}(T) &\subset& \sigma_{gK}(T), \\
  \acc \, \sigma(T) \setminus \acc \, \sigma_{\mathcal{B}}(T) &\subset& \sigma_{gK}(T),\\
\int \, \sigma_{ap}(T) \setminus \int \, \sigma_{\mathcal{B}_+}(T) &\subset& \sigma_{gK}(T), \\
  \int \, \sigma_{su}(T) \setminus \int \, \sigma_{\mathcal{B}_-}(T) &\subset& \sigma_{gK}(T), \\
  \int \, \sigma(T) \setminus \int \, \sigma_{\mathcal{B}}(T) &\subset& \sigma_{gK}(T).
\end{eqnarray*}
 \end{corollary}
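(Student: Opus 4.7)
The plan is to derive all six inclusions as immediate consequences of the spectral identity \eqref{glava-} combined with the fact that certain pairs of the classes ${\bf gDR}_i$ coincide by Theorems \ref{t1}, \ref{t2}, and \ref{t3}.

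First I would observe that Theorem \ref{t1} gives the equivalence of conditions (ii) and (vi), so ${\bf gD\M}(X)={\bf gD\B_+}(X)$, and hence $\sigma_{{\bf gD\M}}(T)=\sigma_{{\bf gD\B_+}}(T)$. Applying the two expressions in \eqref{glava-} for $i=10$ and $i=7$ yields
\begin{equation*}
\sigma_{gK}(T)\cup\acc\,\sigma_{ap}(T)=\sigma_{gK}(T)\cup\acc\,\sigma_{\B_+}(T),
\end{equation*}
and similarly
\begin{equation*}
\sigma_{gK}(T)\cup\int\,\sigma_{ap}(T)=\sigma_{gK}(T)\cup\int\,\sigma_{\B_+}(T).
\end{equation*}
Next I would take any $\lambda\in\acc\,\sigma_{ap}(T)\setminus\acc\,\sigma_{\B_+}(T)$: by the first identity $\lambda$ lies in the right-hand side, but not in $\acc\,\sigma_{\B_+}(T)$, hence $\lambda\in\sigma_{gK}(T)$. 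This proves the first inclusion, and the fourth follows in exactly the same way from the second identity.

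The second and fifth inclusions are obtained analogously by invoking Theorem \ref{t2} (which states ${\bf gD\Q}(X)={\bf gD\B_-}(X)$) and applying \eqref{glava-} with $i=11$ and $i=8$. The third and sixth inclusions use Theorem \ref{t3} (which shows that generalized Drazin invertibility coincides with the class ${\bf gD\B}(X)$), together with \eqref{glava-} for $i=12$ and $i=9$.

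No step presents any serious obstacle; the whole point is that once the equalities of the generalized Drazin classes are in hand, the two different descriptions of $\sigma_{{\bf gDR}_i}(T)$ in \eqref{glava-} force the symmetric differences of the corresponding accumulation (respectively interior) sets to be absorbed by $\sigma_{gK}(T)$. The only care needed is bookkeeping: matching the correct index $i$ in \eqref{glava-} with the pair of classes proved equal in the relevant theorem, and using the two equalities in \eqref{glava-} to separate the ``acc" inclusions from the ``int" ones.
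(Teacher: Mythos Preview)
Your argument is correct and is essentially the paper's own approach: the paper derives the inclusions directly from the equivalences (iii)$\Longleftrightarrow$(vii) and (iv)$\Longleftrightarrow$(viii) in Theorems \ref{t1} and \ref{t2} (and implicitly the analogous equivalences in Theorem \ref{t3}), which is exactly the content you invoke, only you route it through the identity \eqref{glava-} together with the class equalities (ii)$\Longleftrightarrow$(vi) rather than citing the ``GKD'' equivalences directly. The difference is purely cosmetic.
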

 \begin{proof}
Follows from the equivalences {\rm (iii)} $\Longleftrightarrow  $
{\rm (vii)} and {\rm (iv)} $\Longleftrightarrow  $ {\rm (viii)} in
Theorems \ref{t1} and \ref{t2}.
\end{proof}

\begin{remark}
{\em Let $T\in L(X)$ be a Riesz operator with infinite spectrum. It
was shown in \cite{kinezi} that $T$ does not admit a GKD. It is
interesting to note that the same follows from Corollary
ďż˝\ref{cor5}. Namely, $\sigma_{\mathcal{B}}(T)=\{0\}$ and so $0
\not \in \acc \, \sigma_{\B}(T)$, while  $0\in \acc \, \sigma(T) $.
Therefore, $0\in \acc \, \sigma(T)\setminus \acc \, \sigma_{\B}(T)$
and hence $0\in \sigma_{gK}(T)$ by Corollary \ref{cor5}.


According to Theorems \ref{t1} and  \ref{t2} it follows that $T$ is not generalized Drazin bounded below (surjective), while according to Theorem \ref{glavna} 
it follows that $T$ is not generalized Drazin upper (lower)
semi-Fredholm. From $\sigma(T)=\sigma_{ap}(T)=\sigma_{su}(T)$ and
$\sigma_{{\bf R}_i}(T) =\{0\}$, $1\le i\le 9$,  it follows that
$0\notin \int\,  \sigma_{{\bf R}_i}(T) =\emptyset$, $1\le i\le 12$.
This shows that the condition that the operator admits a GKD in the
statements (iv), (vii) and (viii) of Theorems \ref{t1} and \ref{t2},
as well as in the statements (ii), (iii) and (iv) of Theorem
\ref{t3} and also, in the statements (ii) and (iii) of Theorem
\ref{glavna}, can not be omitted.}
\end{remark}
\noindent The following question is natural.

\begin{question}
{\em Does an operator which does not admit a GKD and such that $0$
is not an accumulation point of its approximate point (resp.
surjective) spectrum exist? If the answer is affirmative, then it
means that the condition that $T$ admits a GKD in the statements
(iii) of Theorems \ref{t1} and \ref{t2} can not be omitted.}
\end{question}

K. Miloud Hocine et al. \cite[Theorems 3.16, 3.17]{algeria}
mentioned that the sufficient condition for $T$ to be a sum of a
bounded below (surjective) operator and a quasinilpotent one, is
that  $0\in \iso\, \sigma_{ap}(T)$ ($0\in \iso\, \sigma_{su}(T)$),
but they did not give the proof. In  \cite[Propositions 3.5,
3.7]{{Benharrat-novi}} the authors proved that the condition that
$0\in \iso\, \sigma_{ap}(T)$ ($0\in \iso\, \sigma_{su}(T)$) implies
that $T$ admits a GKD, but their proof is incorrect. Namely, from
the assumption  $0\in\iso\, \sigma_{ap}(T)$ they conclude wrongly
that $T$ is Kato (by incorrect using \cite[Th\`{e}or\'{e}me
2.1]{Bou}), and from this fact they obtain that $T$ is bounded
below,
 what  is  a contradiction-this
would mean that for  arbitrary bounded  linear operator $T$, $\iso\,
\sigma_{ap}(T)=\emptyset$, which, of course, is not true.

\medskip

We give an alternative proof of the inclusion
$$
\partial\sigma(T)\cap\acc\, \sigma(T)\subset\sigma_{gK}(T)
$$
from  Q. Jiang and H. Zhong's paper \cite{kinezi} (see Corollary 3.6
and Theorem 3.8), which does not require any use of the
single-valued extension property. What is more, we establish the
inclusions of the same type for other spectra.

\begin{theorem}\label{partial} Let $T\in L(X)$. Then the following inclusions hold:
\begin{equation}\label{poss-0}
 \partial\sigma_{{\bf R}_i}(T)\cap\acc\, \sigma_{{\bf R}_i}(T)\subset \sigma_{gK}(T) ,\ \ 1\le  i\le 12.
\end{equation}
Moreover,
\begin{eqnarray*}
  \partial\sigma_{\B_+}(T)\cap\acc\, \sigma_{ap}(T) &\subset& \sigma_{gK}(T);\label{poss-1} \\
   \partial\sigma_{\B_-}(T)\cap\acc\, \sigma_{su}(T) &\subset& \sigma_{gK}(T);\label{poss-2} \\
    \partial\sigma_{\B}(T)\cap\acc\, \sigma(T) &\subset& \sigma_{gK}(T).\label{poss-3}
\end{eqnarray*}
\end{theorem}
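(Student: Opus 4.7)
The plan is to prove every inclusion by contraposition. Fix $\lambda$ in the alleged left-hand side and suppose, toward a contradiction, that $\lambda \notin \sigma_{gK}(T)$. Then $T-\lambda$ admits a generalized Kato decomposition, and since $\lambda \in \partial \sigma_{{\bf R}_i}(T)$ one has $0 \notin \int \sigma_{{\bf R}_i}(T-\lambda)$. Thus $T-\lambda$ satisfies the hypothesis ``admits a GKD and $0$ is not an interior point of $\sigma_{{\bf R}_i}$'', which is precisely condition (iii) of Theorem \ref{glavna} for $1 \le i \le 6$, condition (viii) of Theorem \ref{t1} for $i=7$, condition (viii) of Theorem \ref{t2} for $i=8$, condition (iii) of Theorem \ref{t3} for $i=9$, condition (iv) of Theorem \ref{t1} for $i=10$, condition (iv) of Theorem \ref{t2} for $i=11$, and condition (ii) of Theorem \ref{t3} for $i=12$. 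The equivalence cited in each case yields $T-\lambda \in {\bf gDR}_i$.

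With $T-\lambda \in {\bf gDR}_i$, Proposition \ref{open} produces $\varepsilon>0$ such that $T-\lambda-\mu \in {\bf R}_i$ for every $0 < |\mu| < \varepsilon$, equivalently $\lambda \notin \acc \, \sigma_{{\bf R}_i}(T)$. This contradicts the assumption $\lambda \in \acc \, \sigma_{{\bf R}_i}(T)$ and proves \eqref{poss-0} for all twelve indices.

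The three ``moreover'' inclusions follow from the same contrapositive template, but invoke the cross-spectrum implications of Theorems \ref{t1}--\ref{t3}. For instance, if $\lambda \in \partial \sigma_{\B_+}(T) \cap \acc \, \sigma_{ap}(T)$ and $\lambda \notin \sigma_{gK}(T)$, then $T-\lambda$ admits a GKD with $0 \notin \int \sigma_{\B_+}(T-\lambda)$, so the implication (viii)$\Longrightarrow$(iii) of Theorem \ref{t1} gives $0 \notin \acc \, \sigma_{ap}(T-\lambda)$, a contradiction. The remaining two inclusions are dispatched identically using Theorem \ref{t2} (viii)$\Longrightarrow$(iii) and Theorem \ref{t3} (iii)$\Longrightarrow$(i) combined with Theorem \ref{Kol-Dra}(iii). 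I anticipate no substantive obstacle: all analytic content (gap estimates, index stability, openness of the classes ${\bf R}_i$) has already been absorbed into Theorems \ref{glavna}--\ref{t3} and Proposition \ref{open}, so the remaining task amounts to matching each index with the appropriate equivalence and, for the moreover part, using the cross-implications between interior-spectrum and accumulation-spectrum formulations.
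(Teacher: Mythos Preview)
Your proof is correct and follows essentially the same route as the paper. The only cosmetic difference is that for \eqref{poss-0} you explicitly pass through $T-\lambda\in{\bf gDR}_i$ and then invoke Proposition~\ref{open} to conclude $\lambda\notin\acc\,\sigma_{{\bf R}_i}(T)$, whereas the paper appeals directly to the equivalence ``not an interior point $\Longleftrightarrow$ not an accumulation point'' (under the GKD hypothesis) encoded in the same theorems; the ``moreover'' part is handled identically in both.
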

\begin{proof}

From Theorems \ref{t3} and \ref{Kol-Dra},  the equivalence
(iii)$\Longleftrightarrow$(iv) in  Theorems \ref{t1} and  \ref{t2}
and  the equivalence (ii)$\Longleftrightarrow$(iii) in  Theorem
\ref{glavna} it foollows that if $T-\lambda \in L(X)$ admits a GKD,
then
$$
\lambda\in\int\; \sigma_{{\bf R}_i}(T)\Longleftrightarrow
\lambda\in\acc\; \sigma_{{\bf R}_i}(T),\ 1\leq i \leq 12.
$$
Therefore, we have the inclusions $$\partial\sigma_{{\bf
R}_i}(T)\cap \acc\, \sigma_{{\bf R}_i}(T)= \acc\, \sigma_{{\bf
R}_i}(T)\setminus \int\, \sigma_{{\bf
R}_i}(T)\subset\sigma_{gK}(T),\ 1\leq i \leq 12.$$

Suppose that $\lambda\in\partial\sigma_{\B_+}(T)$ and $T-\lambda$
admits a GKD. Then $\lambda\notin\int\; \sigma_{\B_+}(T)$ and from
the equivalence {\rm (viii)}$\Longleftrightarrow${\rm (iii)} in
Theorem \ref{t1} we get that $\lambda\notin\acc\, \sigma_{ap}(T)$.
Therefore, if $\lambda\in\partial\sigma_{\B_+}(T)\cap \acc\,
\sigma_{ap}(T)$, then $T-\lambda$ does not admit a GKD, i.e.
$\lambda\in  \sigma_{gK}(T)$. The remaining inclusions can be proved
analogously.
\end{proof}

\noindent From \eqref{poss-0} it follows that

\begin{equation*}
 \partial\sigma_{{\bf R}_i}(T)\subset \sigma_{gK}(T)\cup{\iso}\, \sigma_{{\bf R}_i}(T),\ \ 1\le  i\le 12,
\end{equation*}
which implies that
$$
\partial\sigma_{{\bf R}_i}(T)\setminus \sigma_{gK}(T)\subset {\iso}\, \sigma_{{\bf R}_i}(T),
 $$
and hence $\partial\sigma_{{\bf R}_i}(T)\setminus \sigma_{gK}(T)$
consist of at most countably many points.


\bigskip

The {\it connected hull}  of a compact subset $K$ of the complex
plane $\CC$, denoted by $\eta K$, is the complement of the unbounded
component of $\CC\setminus K$ (\cite[Definition 7.10.1]{H3}). Given
a compact subset $K$ of the plane, a hole of $K$ is a bounded
component of $\CC\setminus K$, and so a hole of $K$ is a component
of $\eta K\setminus K$. Generally (\cite[Theorem 7.10.3]{H3}), for
compact subsets $H,K\subset\CC$,
\begin{equation}\label{spec.1}
\partial H\subset K\subset
H\Longrightarrow\partial H\subset\partial K\subset K\subset
H\subset\eta K=\eta H\ .
\end{equation}
Evidently, if $K\subseteq\CC$ is finite, then $\eta K=K$. Therefore,
for compact subsets $H,K\subseteq\CC$,
\begin{equation}\label{spec.2}
 \eta K=\eta H\Longrightarrow(H\ {\rm is\
finite}\Longleftrightarrow K\ {\rm is\ finite}),
\end{equation}
and in that case $H=K$.

\medskip

For $\lambda_0\in \CC$ we set $K(\lambda_0,\epsilon)=\{\lambda\in
\CC: |\lambda-\lambda_0|<\epsilon\}$.

\begin{theorem}\label{rub} Let $T\in L(X)$. Then

%
\begin{center}
\begin{tabular}{rcccl}
& & $\partial \sigma_{gD\mathcal{M}}(T) \; \; \subset \; \; \partial \sigma_{gD\mathcal{W}_+}(T) \; \; \subset \; \; \partial \sigma_{gD\Phi_+}(T)$& & \\
 & \rotatebox{30}{$\subset$} & &\rotatebox{-30}{$\subset$}& \\
$\partial \sigma_{gD}(T)$ & $\subset$ &$\partial \sigma_{gD\mathcal{W}}(T) \; \; \; \; \; \; \; \subset \; \; \; \; \; \; \; \partial \sigma_{gD\Phi}(T)$& $\subset$ &$\partial \sigma_{gK}(T)$\\
 &\rotatebox{-30}{$\subset$} & & \rotatebox{30}{$\subset$} &\\
& & $\partial \sigma_{gD\mathcal{Q}}(T) \; \; \subset \; \; \partial \sigma_{gD\mathcal{W}_-}(T) \; \; \subset \; \; \partial \sigma_{gD\Phi_-}(T)$ & &\\

\end{tabular}
\end{center}

\begin{eqnarray*}
&\partial\sigma_{gD \Phi}(T)\subset \partial\sigma_{gD \Phi_+}(T),\ \  \partial\sigma_{gD \Phi}(T)\subset \partial\sigma_{gD \Phi_-}(T), \\
  &\partial\sigma_{gD \W}(T)\subset \partial\sigma_{gD \W_+}(T),\ \  \partial\sigma_{gD \W}(T)\subset \partial\sigma_{gD \W_-}(T),
  \end{eqnarray*}


and

\begin{eqnarray}%
\eta\sigma_{gK}(T)&=&\eta\sigma_{gD\Phi_+}(T)=\eta\sigma_{gD\W_+}(T)=\eta\sigma_{gD\M}(T)\nonumber\\&=&\eta\sigma_{gD\Phi_-}(T)=\eta\sigma_{gD\W_-}(T)=\eta\sigma_{gD\Q}(T)\label{rub2}
\\&=&\eta\sigma_{gD\Phi}(T)=\eta\sigma_{gD\W}(T)=\eta\sigma_{gD}(T).\nonumber
\end{eqnarray}
\end{theorem}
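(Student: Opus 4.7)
The plan is to reduce the entire theorem to a single \emph{master inclusion}
\[
\partial\sigma_{{\bf gDR}_i}(T)\subset\sigma_{gK}(T),\qquad 1\le i\le 12,
\]
and then let the elementary topological lemma \eqref{spec.1} finish the job. Once this inclusion is in hand, every boundary containment in the displayed diagram and all ten hull equalities in \eqref{rub2} will fall out mechanically.

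First I would prove the master inclusion by contradiction. Suppose $\lambda\in\partial\sigma_{{\bf gDR}_i}(T)$ but $\lambda\notin\sigma_{gK}(T)$, so $T-\lambda$ admits a GKD. Since $\lambda\in\sigma_{{\bf gDR}_i}(T)$, the second form of \eqref{glava-} forces $\lambda\in\int\,\sigma_{{\bf R}_i}(T)$, and thus some disc $K(\lambda,\epsilon)$ lies in $\sigma_{{\bf R}_i}(T)$. Every point of this disc is then an interior point of $\sigma_{{\bf R}_i}(T)$, hence an accumulation point of $\sigma_{{\bf R}_i}(T)$, hence by the first form of \eqref{glava-} a member of $\sigma_{{\bf gDR}_i}(T)$. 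This would place $\lambda$ in the interior of $\sigma_{{\bf gDR}_i}(T)$, contradicting $\lambda\in\partial\sigma_{{\bf gDR}_i}(T)$.

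Next I would deduce all the boundary containments in the diagram. The ambient class inclusions $\M(X)\subset\W_+(X)\subset\Phi_+(X)$, $\Q(X)\subset\W_-(X)\subset\Phi_-(X)$, $L(X)^{-1}\subset\W(X)\subset\Phi(X)$, $\W(X)\subset\W_\pm(X)$ and $\Phi(X)\subset\Phi_\pm(X)$ all propagate to the corresponding ${\bf gD}$-classes and reverse direction at the level of spectra. For every pair $K\subset H$ appearing in the diagram---including the rightmost column, where $K=\sigma_{gK}(T)$ and $H$ is one of $\sigma_{gD\Phi_\pm}(T)$ or $\sigma_{gD\Phi}(T)$---the master inclusion combined with $\sigma_{gK}(T)\subset\sigma_{{\bf gDR}_j}(T)$ (read off from \eqref{glava-}) gives $\partial H\subset\sigma_{gK}(T)\subset K$. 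I would then invoke the elementary fact that for compact sets $K\subset H\subset\CC$ with $\partial H\subset K$ one has $\partial H\subset\partial K$: any $\lambda\in\partial H$ is a limit of points of $\CC\setminus H\subset\CC\setminus K$, so $\lambda\in\overline{\CC\setminus K}\cap K=\partial K$. This simultaneously yields every boundary inclusion asserted in the theorem, including the three extra containments $\partial\sigma_{gD\Phi}(T)\subset\partial\sigma_{gD\Phi_\pm}(T)$ and their $\W$-analogues.

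Finally, the hull equalities \eqref{rub2} are immediate from \eqref{spec.1}: for each $i$ the pair $K=\sigma_{gK}(T)\subset H=\sigma_{{\bf gDR}_i}(T)$ satisfies $\partial H\subset K\subset H$ (by the master inclusion and \eqref{glava-}), so $\eta\sigma_{gK}(T)=\eta\sigma_{{\bf gDR}_i}(T)$; running $i$ over $\{1,\dots,12\}$ collapses all ten hulls in \eqref{rub2} onto $\eta\sigma_{gK}(T)$. The only substantive step is the master inclusion; everything afterwards is topological bookkeeping on top of \eqref{glava-} and \eqref{spec.1}. The main obstacle to watch for is keeping track of the direction-reversal when passing from class inclusions to spectral inclusions, but this is a routine matter.
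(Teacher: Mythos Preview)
Your proposal is correct and follows essentially the same route as the paper. The paper also reduces everything to the single inclusion $\partial\sigma_{{\bf gDR}_i}(T)\subset\sigma_{gK}(T)$, proved via the representation $\sigma_{{\bf gDR}_i}(T)=\sigma_{gK}(T)\cup\int\,\sigma_{{\bf R}_i}(T)$ from \eqref{glava-} together with the observation that a point of $\int\,\sigma_{{\bf R}_i}(T)$ drags an entire disc into $\sigma_{{\bf gDR}_i}(T)$; the boundary inclusions and the hull equalities are then read off from \eqref{spec.1} exactly as you do.
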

\begin{proof} According to (\ref{spec.1})
 it is sufficient to prove the inclusions

\snoi {\rm (i)} $ \partial\sigma_{gD}(T)\subset \sigma_{gK}(T)$;
\quad  {\rm (ii)} $ \partial\sigma_{gD\M}(T)\subset \sigma_{gK}(T)$;
\quad {\rm (iii)} $ \partial\sigma_{gD\W_+}(T)\subset
\sigma_{gK}(T)$;

\snoi {\rm (iv)} $ \partial\sigma_{gD\Phi_+}(T)\subset
\sigma_{gK}(T)$; \ \ {\rm (v)} $ \partial\sigma_{gD\Q}(T)\subset
\sigma_{gK}(T)$; \  {\rm (vi)} $ \partial\sigma_{gD\W_-}(T)\subset
\sigma_{gK}(T)$;

\snoi {\rm (vii)} $ \partial\sigma_{gD\Phi_-}(T)\subset
\sigma_{gK}(T)$; \ {\rm (viii)} $ \partial\sigma_{gD\W}(T)\subset
\sigma_{gK}(T)$; \ {\rm (ix)} $ \partial\sigma_{gD\Phi}(T)\subset
\sigma_{gK}(T)$.

\medskip

 Suppose that $\lambda_0\in \partial\sigma_{gD}(T)$. Since $\sigma_{gD}(T)$ is closed, it follows that
  \begin{equation}\label{mlad}
    \lambda_0\in \sigma_{gD}(T)=\sigma_{gK}(T)\cup{\rm int}\, \sigma(T).
  \end{equation}

 We prove that
 \begin{equation}\label{mlad1}
   \lambda_0\notin {\rm int}\, \sigma(T).
\end{equation}
Suppose  on the contrary that $\lambda_0\in {\rm int}\, \sigma(T)$.
Then there exists an $\epsilon>0$ such that
$K(\lambda_0,\epsilon)\subset \sigma(T)$. This means that
$K(\lambda_0,\epsilon)\subset {\rm int}\,\sigma(T)$ and hence,
$K(\lambda_0,\epsilon)\subset \sigma_{gD}(T)$, which contradicts the
fact that $\lambda_0\in \partial\sigma_{gD}(T)$. Now from
(\ref{mlad}) and (\ref{mlad1}), it follows that
$\lambda_0\in\sigma_{gK}(T)$.

The inclusions (ii)-(ix) can be proved similarly to the inclusion
(i).
\end{proof}

\smallskip

\noindent From (\ref{spec.2}) and (\ref{rub2}) it follows that
$\sigma_{gK}(T)$ is finite if and only if $\sigma_{{\bf g D
R}_i}(T)$ is finite for any $i\in\{1,\dots ,12\}$ and in that case
$\sigma_{gK}(T)=\sigma_{{\bf g D R}_i}(T)$ for all $i\in\{1,\dots
,12\}$. In particular, $\sigma_{gK}(T)\ {\rm  is\ finite}$ if and
only if $\sigma_{gD}(T)\ {\rm  is\  finite},$
 and in that case $\sigma_{gK}(T)=\sigma_{gD}(T)$. Moreover, $\sigma_{gK}(T)=\emptyset$ if and only if $\sigma_{{\bf g D R}_i}(T)=\emptyset$  for any
$i\in\{1,\dots ,12\}$, that is, if and only if  $\sigma (T)$ is a
finite set.

\smallskip

The Drazin spectrum of $T \in L(X)$ is defined as
\[
\sigma_{D}(T)=\{\lambda \in \CC: T-\lambda \; \text{is not Drazin
invertible}\}.
\]

\begin{theorem}\label{rub-Kt} Let $T\in L(X)$. Then

\[\partial \sigma_{D}(T) \subset \partial \sigma_{B\mathcal{W}}(T) \subset \partial \sigma_{B\Phi}(T) \subset \partial
\sigma_{Kt}(T)\]

and

\[
\eta\sigma_{Kt}(T)=\eta\sigma_{B\Phi}(T)=\eta\sigma_{B\W}(T)=\eta\sigma_{D}(T).\nonumber
\]
\end{theorem}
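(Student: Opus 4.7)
The plan is to follow the method used in Theorem \ref{rub}, with Kato type playing the role that generalized Kato decomposition did there. The starting point consists of the three representations
\begin{equation*}
\sigma_{B\Phi}(T)=\sigma_{Kt}(T)\cup\int\,\sigma_{\Phi}(T),\quad \sigma_{B\W}(T)=\sigma_{Kt}(T)\cup\int\,\sigma_{\W}(T),\quad \sigma_{D}(T)=\sigma_{Kt}(T)\cup\int\,\sigma(T).
\end{equation*}
The first two are \eqref{inkluzijeBF}; the third follows from Theorem \ref{B-Fredholm} applied with $i=12$, since ${\bf DR}_{12}$ coincides with the class of Drazin invertible operators (as noted in the remark after that theorem). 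Together with the elementary inclusions $\sigma_{Kt}(T)\subset\sigma_{B\Phi}(T)\subset\sigma_{B\W}(T)\subset\sigma_{D}(T)$, coming from the chain of implications Drazin invertible $\Rightarrow$ B-Weyl $\Rightarrow$ B-Fredholm $\Rightarrow$ Kato type, the whole statement reduces to three boundary inclusions of the form $\partial\sigma_{\star}(T)\subset\sigma_{Kt}(T)$ for $\star\in\{B\Phi,B\W,D\}$.

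To establish any such inclusion, fix $\lambda_0\in\partial\sigma_{\star}(T)$. Since $\sigma_{\star}(T)$ is closed, $\lambda_0\in\sigma_{\star}(T)=\sigma_{Kt}(T)\cup\int\,\sigma_{\bullet}(T)$, where $\sigma_{\bullet}$ denotes the corresponding classical spectrum $\sigma_{\Phi}$, $\sigma_{\W}$ or $\sigma$. If $\lambda_0$ were in $\int\,\sigma_{\bullet}(T)$, then some $K(\lambda_0,\epsilon)$ would be contained in $\sigma_{\bullet}(T)$, and therefore in $\int\,\sigma_{\bullet}(T)\subset\sigma_{\star}(T)$, contradicting $\lambda_0\in\partial\sigma_{\star}(T)$. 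Hence $\lambda_0\in\sigma_{Kt}(T)$, exactly mirroring the argument used in Theorem \ref{rub}.

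The boundary chain of the theorem now drops out of \eqref{spec.1}: for instance the inclusions $\partial\sigma_{D}(T)\subset\sigma_{Kt}(T)\subset\sigma_{B\W}(T)\subset\sigma_{D}(T)$ yield $\partial\sigma_{D}(T)\subset\partial\sigma_{B\W}(T)$ upon taking $H=\sigma_{D}(T)$ and $K=\sigma_{B\W}(T)$, and the other two containments are handled identically. Finally, $\partial\sigma_{D}(T)\subset\sigma_{Kt}(T)\subset\sigma_{D}(T)$ together with \eqref{spec.1} gives $\eta\sigma_{Kt}(T)=\eta\sigma_{D}(T)$, and since $\sigma_{B\Phi}(T)$ and $\sigma_{B\W}(T)$ are sandwiched between these two sets, they automatically share the same connected hull. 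No real obstacle is expected; the only minor bookkeeping step is to record the identity $\sigma_{D}(T)=\sigma_{Kt}(T)\cup\int\,\sigma(T)$, which is not displayed in the paper but reads off directly from Theorem \ref{B-Fredholm}.
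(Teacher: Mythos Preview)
Your proposal is correct and follows essentially the same route as the paper: the paper's proof also records $\sigma_D(T)=\sigma_{Kt}(T)\cup\int\,\sigma(T)$ (citing \cite[Theorem 2.9]{aienarosas} rather than deriving it from Theorem \ref{B-Fredholm}), invokes \eqref{inkluzijeBF}, and then refers the reader to the argument of Theorem \ref{rub}, which is precisely the interior-point contradiction and the application of \eqref{spec.1} that you spell out.
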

\begin{proof} From \cite[Theorem 2.9]{aienarosas} we have  $\sigma_D(T)=\sigma_{Kt}(T) \cup \int \, \sigma(T)$. Now use this equality and
˜\eqref{inkluzijeBF}, and proceed similarly as in the proof of
Theorem \ref{rub}.
\end{proof}

\smallskip

The generalized Drazin resolvent set of $T\in L(X)$ is defined by
$\rho_{gK}(T)=\CC\setminus\sigma_{gK}(T)$. As a consequence of
Theorem \ref{rub} we get Theorem 3 in \cite{kinezi2012}, which was
proved by using the technique based on the single-valued extension
property.

\begin{corollary}[\cite{kinezi2012}, Theorem 3] \label{componenta} Let $T\in L(X)$ and let $\rho_{gK}(T)$ has only one component. Then
$$
\sigma_{gK}(T)=\sigma_{gD}(T).
$$
\end{corollary}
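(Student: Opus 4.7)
The plan is to combine the equality of connected hulls from Theorem \ref{rub} with an elementary observation about what the one-component hypothesis says topologically.

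First I would record the easy inclusion $\sigma_{gK}(T)\subset\sigma_{gD}(T)$. Indeed, if $T-\lambda$ is generalized Drazin invertible then by Theorem \ref{Kol-Dra}(v) there exists $(M,N)\in Red(T-\lambda)$ with $(T-\lambda)_M$ invertible (hence Kato) and $(T-\lambda)_N$ quasinilpotent, so $T-\lambda$ admits a GKD.

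Next I would unpack the hypothesis. Since $\sigma_{gK}(T)$ is compact (and in particular bounded), $\CC\setminus\sigma_{gK}(T)=\rho_{gK}(T)$ has exactly one unbounded component. The assumption that $\rho_{gK}(T)$ has only one component therefore forces $\rho_{gK}(T)$ to coincide with that unbounded component. By the definition of the connected hull this gives
\[
\eta\sigma_{gK}(T)=\CC\setminus\rho_{gK}(T)=\sigma_{gK}(T),
\]
i.e. $\sigma_{gK}(T)$ has no holes.

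Finally I would invoke Theorem \ref{rub}, specifically the equality $\eta\sigma_{gK}(T)=\eta\sigma_{gD}(T)$ from \eqref{rub2}. Combining with the previous step,
\[
\sigma_{gK}(T)=\eta\sigma_{gK}(T)=\eta\sigma_{gD}(T)\supset\sigma_{gD}(T),
\]
and together with the inclusion $\sigma_{gK}(T)\subset\sigma_{gD}(T)$ noted at the outset, this yields $\sigma_{gK}(T)=\sigma_{gD}(T)$. There is no real obstacle here: all the analytic content has already been absorbed into Theorem \ref{rub}, and what remains is a short topological manipulation exploiting the one-component hypothesis to identify $\sigma_{gK}(T)$ with its connected hull.
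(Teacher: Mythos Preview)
Your proof is correct and follows essentially the same route as the paper: both argue that the one-component hypothesis forces $\sigma_{gK}(T)=\eta\sigma_{gK}(T)$, then invoke the equality $\eta\sigma_{gK}(T)=\eta\sigma_{gD}(T)$ from Theorem~\ref{rub} together with the inclusion $\sigma_{gK}(T)\subset\sigma_{gD}(T)$ to conclude. The only difference is that you spell out the justification for $\sigma_{gK}(T)\subset\sigma_{gD}(T)$ via Theorem~\ref{Kol-Dra}(v), whereas the paper takes this inclusion as already known (it follows immediately from \eqref{glava-}).
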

\begin{proof} Since
$\rho_{gK}(T)$ has only one component, it follows that
$\sigma_{gK}(T)$ has no holes, and so  $\sigma_{gK}(T)=\eta
\sigma_{gK}(T)$. From (\ref{rub2}) it follows that
$\sigma_{gD}(T)\supset \sigma_{gK}(T)=\eta \sigma_{gK}(T)= \eta
\sigma_{gD}(T)\supset \sigma_{gD}(T)$ and hence
$\sigma_{gD}(T)=\sigma_{gK}(T)$.
\end{proof}


\begin{theorem}\label{nov} Let $T\in L(X)$ and $1\le i\le 12$.
If
\begin{equation}\label{bon}
 \partial\sigma_{{\bf R}_i}(T)\subset\acc\, \sigma_{{\bf R}_i}(T),
\end{equation}
then
%
\begin{equation}\label{part1}
 \partial\sigma_{{\bf R}_i}(T)\subset \sigma_{gK}(T)\subset \sigma_{Kt}(T)\subset \sigma_{eK}(T)\subset \sigma_{{\bf R}_i}(T)
 \end{equation}
and
\begin{equation}\label{pacc1}
  \eta\sigma_{{\bf R}_i}(T)=\eta \sigma_{gK}(T)=\eta \sigma_{Kt}(T)=\eta \sigma_{eK}(T).
\end{equation}
\end{theorem}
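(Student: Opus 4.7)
The plan is to chain the inclusion \eqref{part1} together with the implication \eqref{spec.1} to derive \eqref{pacc1}, so the whole argument reduces to establishing the four inclusions in \eqref{part1}. Of these, the three middle and right inclusions are essentially already available: from \eqref{poc.inkluzija} we have $\sigma_{gK}(T)\subset\sigma_{Kt}(T)\subset\sigma_{eK}(T)$, and the fact (noted right after the definition of essentially Kato) that every operator in any class $\mathbf{R}_i$, $1\le i\le 12$, is essentially Kato gives $\sigma_{eK}(T)\subset\sigma_{\mathbf{R}_i}(T)$. So the only real content is the first inclusion $\partial\sigma_{\mathbf{R}_i}(T)\subset\sigma_{gK}(T)$.

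For that inclusion, the idea is to invoke Theorem \ref{partial}, which tells us that
\[
\partial\sigma_{\mathbf{R}_i}(T)\cap\acc\,\sigma_{\mathbf{R}_i}(T)\subset\sigma_{gK}(T),\quad 1\le i\le 12.
\]
Under the standing hypothesis \eqref{bon}, $\partial\sigma_{\mathbf{R}_i}(T)\subset\acc\,\sigma_{\mathbf{R}_i}(T)$, so the intersection on the left collapses to $\partial\sigma_{\mathbf{R}_i}(T)$ itself, and we get $\partial\sigma_{\mathbf{R}_i}(T)\subset\sigma_{gK}(T)$. This completes \eqref{part1}.

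To obtain the connected hull equalities in \eqref{pacc1}, I would apply the implication \eqref{spec.1} three times. For each of the three choices $K\in\{\sigma_{gK}(T),\sigma_{Kt}(T),\sigma_{eK}(T)\}$, the chain \eqref{part1} yields $\partial\sigma_{\mathbf{R}_i}(T)\subset K\subset\sigma_{\mathbf{R}_i}(T)$, and \eqref{spec.1} (with $H=\sigma_{\mathbf{R}_i}(T)$) then forces $\eta K=\eta\sigma_{\mathbf{R}_i}(T)$. Combining the three equalities gives \eqref{pacc1}.

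There is really no serious obstacle here: the theorem is a packaging of Theorem \ref{partial}, the basic inclusions of the various Kato-type spectra, and the topological lemma \eqref{spec.1}. The only place requiring a small verification is observing that $\sigma_{eK}(T)\subset\sigma_{\mathbf{R}_i}(T)$ holds uniformly in $i$, which for $i\in\{10,11,12\}$ follows from the chain $\sigma_{eK}(T)\subset\sigma_{ap}(T)\cap\sigma_{su}(T)\subset\sigma(T)$ already recorded in \eqref{poc.inkluzija}, and for the remaining $i$ from the fact cited from \cite[Theorem 16.21]{Mu}.
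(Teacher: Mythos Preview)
Your proposal is correct and follows essentially the same approach as the paper: you derive the first inclusion of \eqref{part1} from Theorem \ref{partial} via the hypothesis \eqref{bon}, take the remaining inclusions from \eqref{poc.inkluzija} and the fact that each $\mathbf{R}_i$ is contained in the essentially Kato class, and then apply \eqref{spec.1} to obtain \eqref{pacc1}. Your write-up is in fact more explicit than the paper's, which leaves the verification of $\sigma_{eK}(T)\subset\sigma_{\mathbf{R}_i}(T)$ and the application of \eqref{spec.1} implicit.
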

\begin{proof}
From $\partial\sigma_{{\bf R}_i}(T)\subset\acc\, \sigma_{{\bf R}_i}(T)$ it follows that $\partial\sigma_{{\bf R}_i}(T)\cap\acc\, \sigma_{{\bf R}_i}(T)=\partial\sigma_{{\bf R}_i}(T)$, and so  from (\ref{poss-0}) it follows that $\partial\sigma_{{\bf R}_i}(T)\subset \sigma_{gK}(T)$. 
(\ref{pacc1}) follows from (\ref{part1}) and (\ref{spec.1}).
\end{proof}

The Goldberg spectrum of $T\in L(X)$ is defined by
$$\sigma_{ec}(T)=\{\lambda\in\CC:R(T-\lambda)\ {\rm is\ not\ closed}\}.
$$
Obviously, $\sigma_{ec}(T)\subset\sigma_{{\bf R}_i}(T)$ for $1 \leq
i \leq 12$.
\begin{theorem}\label{Ben}  Let $T\in L(X)$ and $1\le i\le 12$. If
\begin{equation}\label{bac}
\sigma_{{\bf R}_i}(T)=\partial\sigma_{{\bf R}_i}(T)=\acc\,
\sigma_{{\bf R}_i}(T),
\end{equation}
then
\begin{equation}\label{sp}
\sigma_{ec}(T)\subset
\sigma_{gK}(T)=\sigma_{Kt}(T)=\sigma_{eK}(T)=\sigma_{{\bf
R}_i}(T)=\sigma_{{\bf g D R}_i}(T).
\end{equation}
\end{theorem}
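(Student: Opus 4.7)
The plan is to assemble the equalities by invoking the preceding theorems, since the hypothesis (\ref{bac}) is strong enough to collapse the chain of inclusions established in Theorem \ref{nov}. First I would observe that the assumption $\partial\sigma_{{\bf R}_i}(T)=\acc\,\sigma_{{\bf R}_i}(T)$ trivially gives $\partial\sigma_{{\bf R}_i}(T)\subset\acc\,\sigma_{{\bf R}_i}(T)$, which is precisely the hypothesis (\ref{bon}) required by Theorem \ref{nov}. Applying that theorem yields the chain
\[\partial\sigma_{{\bf R}_i}(T)\subset \sigma_{gK}(T)\subset \sigma_{Kt}(T)\subset \sigma_{eK}(T)\subset \sigma_{{\bf R}_i}(T).\]

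Next I would use the remaining half of the hypothesis, namely $\sigma_{{\bf R}_i}(T)=\partial\sigma_{{\bf R}_i}(T)$, to squeeze the chain into equalities. Since the leftmost and rightmost sets in the chain now coincide, every middle set must equal $\sigma_{{\bf R}_i}(T)$, giving
\[\sigma_{gK}(T)=\sigma_{Kt}(T)=\sigma_{eK}(T)=\sigma_{{\bf R}_i}(T).\]
The inclusion $\sigma_{ec}(T)\subset \sigma_{gK}(T)$ then follows immediately from the trivial observation recorded just before the theorem that $\sigma_{ec}(T)\subset\sigma_{{\bf R}_i}(T)$ for $1\le i\le 12$.

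It remains to identify $\sigma_{{\bf gD R}_i}(T)$ with $\sigma_{{\bf R}_i}(T)$, and for this I would appeal to formula (\ref{glava-}), which states
\[\sigma_{{\bf gD R}_i}(T)=\sigma_{gK}(T)\cup \acc\,\sigma_{{\bf R}_i}(T).\]
By the hypothesis $\acc\,\sigma_{{\bf R}_i}(T)=\sigma_{{\bf R}_i}(T)$, together with the equality $\sigma_{gK}(T)=\sigma_{{\bf R}_i}(T)$ already obtained, the right-hand side collapses to $\sigma_{{\bf R}_i}(T)$. This closes the chain of equalities asserted in (\ref{sp}).

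The proof is essentially a bookkeeping argument, so there is no genuine obstacle; the only point requiring a moment's care is verifying that the two parts of the hypothesis (\ref{bac}) play distinct roles: the inclusion $\partial\subset\acc$ is what feeds Theorem \ref{nov}, whereas the equality $\sigma_{{\bf R}_i}=\partial\sigma_{{\bf R}_i}$ is what closes the chain, and the full equality $\acc\,\sigma_{{\bf R}_i}=\sigma_{{\bf R}_i}$ is what eliminates the accumulation term in (\ref{glava-}).
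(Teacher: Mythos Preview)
Your proof is correct and follows essentially the same route as the paper: apply Theorem \ref{nov} to obtain the chain of inclusions, collapse it using $\sigma_{{\bf R}_i}(T)=\partial\sigma_{{\bf R}_i}(T)$, and read off the Goldberg spectrum inclusion from the remark preceding the theorem. Your treatment of the equality $\sigma_{{\bf gDR}_i}(T)=\sigma_{{\bf R}_i}(T)$ via formula (\ref{glava-}) is in fact more explicit than the paper's, which simply inserts this equality into the final line without comment.
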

\begin{proof} Suppose that $\sigma_{{\bf R}_i}(T)=\partial\sigma_{{\bf R}_i}(T)$ and that every $\lambda\in  \sigma_{{\bf R}_i}(T)$ is not isolated in $\sigma_{{\bf R}_i}(T)$.
From Theorem \ref{nov} it follows that
\begin{eqnarray*}
  \sigma_{{\bf R}_i}(T)=\partial\sigma_{{\bf R}_i}(T)\subset \sigma_{gK}(T)\subset \sigma_{Kt}(T)\subset \sigma_{eK}(T)\subset \sigma_{{\bf R}_i}(T),
\end{eqnarray*}
and so
$$
\sigma_{ec}(T)\subset\sigma_{{\bf
R}_i}(T)=\sigma_{gK}(T)=\sigma_{Kt}(T)=\sigma_{eK}(T)= \sigma_{{\bf
g D R}_i}(T).
$$
\end{proof}
\begin{example}{\em Let $U$ and $V$ be as in Example ˜\ref{ex2}. Since $\sigma_{ap}(U)=\sigma_{su}(V)=\partial\DD,$
then $\sigma_{ap}(U)=\partial \sigma_{ap}(U)=\acc\, \sigma_{ap}(U)$
and $\sigma_{su}(V)=\partial\sigma_{su}(V)=\acc\, \sigma_{su}(V)$.
From Theorem \ref{Ben}  we get
\begin{eqnarray*}
    \sigma_{gK}(U)=\sigma_{K}(U)=\sigma_{ap}(U)=\sigma_{g D\M}(U)=\sigma_{g D\W_+}(U)=\sigma_{g D\Phi_+}(U)=\partial\DD,\\
     \sigma_{gK}(V)=\sigma_{K}(V)=\sigma_{su}(V)=\sigma_{g D\Q}(V)=\sigma_{g D\W_-}(V)=\sigma_{g D\Phi_-}(V)=\partial\DD.
\end{eqnarray*}}
\end{example}

It is not difficult to see that if $K\subset\CC$ is compact, then
for $\lambda\in\partial K$  there is equivalence: $ \lambda\in
\iso\, K\Longleftrightarrow \lambda\in \iso\, \partial K$, that is,
\begin{equation}\label{zz}
 \lambda\in \acc\, K\Longleftrightarrow
\lambda\in \acc\, \partial K.
\end{equation}
Moreover, it is well known that
\begin{eqnarray}
  &\partial\sigma(T)\subset\partial\sigma_{ap}(T)\subset\sigma_{ap}(T),\label{ink1}\\
  &\partial\sigma(T)\subset\partial\sigma_{su}(T)\subset\sigma_{su}(T).\label{ink2}
\end{eqnarray}
Now, as  consequences we get
 Theorem 3.12 and Corollary 3.13 in \cite{kinezi}, as well as Theorems 3.8 and 3.9 in \cite{BenBug}.
\begin{corollary}[\cite{kinezi},\cite{BenBug}] \label{PO} Let $T\in L(X)$ be an operator for which $\sigma_{ap}(T)=\partial\sigma (T)$ and every $\lambda\in \partial\sigma (T)$ is not isolated in $\sigma (T)$. Then
\begin{equation}\label{z}
  \sigma_{ec}(T)\subset\sigma_{ap}(T)=\sigma_{gK}(T)=\sigma_{Kt}(T)=\sigma_{eK}(T)=\sigma_{K}(T).
\end{equation}
\end{corollary}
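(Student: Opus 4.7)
The plan is to derive this corollary by specializing Theorem \ref{Ben} to the class ${\bf R}_{10}=\M(X)$, whose spectrum is $\sigma_{\bf R_{10}}(T)=\sigma_{ap}(T)$, and then using \eqref{poc.inkluzija} to extend the chain of equalities to include $\sigma_K(T)$.

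First I would verify the hypothesis (\ref{bac}) of Theorem \ref{Ben} for $i=10$, i.e.\ that $\sigma_{ap}(T)=\partial\sigma_{ap}(T)=\acc\,\sigma_{ap}(T)$. For the first equality, by (\ref{ink1}) one has $\partial\sigma(T)\subset\partial\sigma_{ap}(T)\subset\sigma_{ap}(T)$; combining this with the assumption $\sigma_{ap}(T)=\partial\sigma(T)$ forces equality throughout, giving $\sigma_{ap}(T)=\partial\sigma_{ap}(T)$. For the second equality, let $\lambda\in\sigma_{ap}(T)=\partial\sigma(T)$. By hypothesis $\lambda\notin\iso\,\sigma(T)$, so $\lambda\in\acc\,\sigma(T)$. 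Applying \eqref{zz} with $K=\sigma(T)$ (noting $\lambda\in\partial\sigma(T)$) yields $\lambda\in\acc\,\partial\sigma(T)=\acc\,\sigma_{ap}(T)$, hence $\sigma_{ap}(T)\subset\acc\,\sigma_{ap}(T)$, and the reverse inclusion is automatic.

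Having established (\ref{bac}) for $i=10$, Theorem \ref{Ben} immediately gives
\[
\sigma_{ec}(T)\subset\sigma_{gK}(T)=\sigma_{Kt}(T)=\sigma_{eK}(T)=\sigma_{ap}(T).
\]
It remains to insert $\sigma_K(T)$ into this chain. From \eqref{poc.inkluzija} we have $\sigma_{eK}(T)\subset\sigma_K(T)\subset\sigma_{ap}(T)$, and since the two ends coincide with $\sigma_{ap}(T)$, we conclude $\sigma_K(T)=\sigma_{ap}(T)$, completing (\ref{z}).

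There is no serious obstacle: the proof is essentially a matching of hypotheses. The only point requiring attention is the passage from ``$\lambda$ is not isolated in $\sigma(T)$'' to ``$\lambda$ is not isolated in $\sigma_{ap}(T)$'', which is handled cleanly by the equivalence \eqref{zz} together with the identification $\sigma_{ap}(T)=\partial\sigma(T)$. Everything else is a direct invocation of Theorem \ref{Ben} and the standard inclusions \eqref{poc.inkluzija} and \eqref{ink1}.
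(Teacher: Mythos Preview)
Your proof is correct and follows essentially the same route as the paper: verify hypothesis \eqref{bac} for $i=10$ via \eqref{ink1} and \eqref{zz}, then invoke Theorem~\ref{Ben}. You are in fact slightly more careful than the paper, which simply writes ``from Theorem~\ref{Ben} we get \eqref{z}'' without explicitly noting that the extra equality $\sigma_K(T)=\sigma_{ap}(T)$ follows from sandwiching via \eqref{poc.inkluzija}.
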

\begin{proof}
From $\sigma_{ap}(T)=\partial\sigma (T)$ and (\ref{ink1}) it follows
that $\sigma_{ap}(T)=\partial\sigma_{ap}(T)$, while from (\ref{zz})
it follows that every $\lambda\in \partial\sigma (T)$ is not
isolated in $\partial\sigma (T)$, i.e. in $\sigma_{ap}(T)$. Now from
Theorem \ref{Ben}  we get (\ref{z}).
\end{proof}
\begin{corollary}[\cite{kinezi},\cite{BenBug}] Let $T\in L(X)$ be an operator for which $\sigma_{su}(T)=\partial\sigma (T)$ and every $\lambda\in \partial\sigma (T)$ is not isolated in $\sigma (T)$. Then
$
  \sigma_{ec}(T)\subset\sigma_{su}(T)=\sigma_{gK}(T)=\sigma_{Kt}(T)=\sigma_{eK}(T)=\sigma_{K}(T).$
\end{corollary}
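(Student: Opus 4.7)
The plan is to mirror the proof of Corollary \ref{PO} step by step, replacing the approximate point spectrum with the surjective spectrum throughout and invoking the inclusion chain (\ref{ink2}) in place of (\ref{ink1}). All the machinery needed has already been set up in Theorem \ref{Ben} and the elementary equivalence (\ref{zz}), so the argument should be essentially a transcription.

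First I would verify that the hypothesis $\sigma_{su}(T)=\partial\sigma(T)$, combined with (\ref{ink2}), forces $\sigma_{su}(T)=\partial\sigma_{su}(T)$. Indeed, (\ref{ink2}) gives $\partial\sigma(T)\subset\partial\sigma_{su}(T)\subset\sigma_{su}(T)$, and substituting $\sigma_{su}(T)$ for $\partial\sigma(T)$ on the left collapses this chain to equality.

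Next I would use the equivalence (\ref{zz}) applied to $K=\sigma(T)$ to convert the hypothesis that no $\lambda\in\partial\sigma(T)$ is isolated in $\sigma(T)$ into the statement that no such $\lambda$ is isolated in $\partial\sigma(T)$. Since $\partial\sigma(T)=\sigma_{su}(T)$, this yields $\sigma_{su}(T)\subset\acc\,\sigma_{su}(T)$, and the reverse inclusion is automatic.

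Finally I would apply Theorem \ref{Ben} with $i=11$, for which ${\bf R}_{11}=\Q(X)$ and $\sigma_{{\bf R}_{11}}(T)=\sigma_{su}(T)$. The two facts established above are precisely the hypotheses (\ref{bac}), so (\ref{sp}) delivers $\sigma_{ec}(T)\subset\sigma_{su}(T)=\sigma_{gK}(T)=\sigma_{Kt}(T)=\sigma_{eK}(T)$. The remaining equality with $\sigma_K(T)$ follows immediately from (\ref{poc.inkluzija}), which sandwiches $\sigma_{eK}(T)\subset\sigma_K(T)\subset\sigma_{ap}(T)\cap\sigma_{su}(T)\subset\sigma_{su}(T)$, forcing all four spectra to coincide. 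I do not anticipate any genuine obstacle here, as the corollary is the strict dual of Corollary \ref{PO}; the only thing to keep an eye on is ensuring the correct choice of index $i=11$ so that Theorem \ref{Ben} is invoked for the surjective case rather than the approximate point one.
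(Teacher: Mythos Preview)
Your proposal is correct and follows exactly the approach the paper takes: the paper's proof simply reads ``Follows from (\ref{ink2}), (\ref{zz}) and Theorem \ref{Ben}, analogously to the proof of Corollary \ref{PO},'' which is precisely the outline you give. You are in fact slightly more careful than the paper in explicitly invoking (\ref{poc.inkluzija}) to pin down the equality with $\sigma_K(T)$, a step the paper leaves implicit in both this corollary and Corollary \ref{PO}.
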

\begin{proof}
Follows from (\ref{ink2}), (\ref{zz})  and  Theorem \ref{Ben},
analogously to the proof of Corollary \ref{PO}.
\end{proof}

\medskip\noindent
\author{Milo\v s D. Cvetkovi\'c}

\noindent University of Ni\v s\\
Faculty of Sciences and Mathematics\\
P.O. Box 224, 18000 Ni\v s, Serbia

\noindent {\it E-mail}: {\tt miloscvetkovic83@gmail.com}

\bigskip \noindent
\author{Sne\v zana
\v C. \v Zivkovi\'c-Zlatanovi\'c}

\noindent{University of Ni\v s\\
Faculty of Sciences and Mathematics\\
P.O. Box 224, 18000 Ni\v s, Serbia}

\noindent {\it E-mail}: {\tt mladvlad@mts.rs}

\end{document}